\newcommand{\BBN}{\mathbb N}
\newcommand{\BBR}{\mathbb R}
\newcommand{\expon}{Q}
\newcommand{\map}{f}
\newcommand{\dfn}{h}
\newcommand{\meas}{\mu}																							
\newcommand{\mapmeas}{\nu}
\newcommand{\genpullmeas}[2]{{#1}^*#2}															
\newcommand{\pullmeas}{\genpullmeas{\map}{\mapmeas}}
\newcommand{\lloc}[1]{ L^{#1}_{\operatorname{loc}}}									
\newcommand{\norm}[1]{||#1||_{\expon}}
\newcommand{\esssup}{\operatorname{ess }\sup}
\newcommand{\lebesgue}[1]{\operatorname{m}_{#1}}										
\newcommand{\dist}{\operatorname{dist}}															
\newcommand{\diam}{\operatorname{diam}}		
\newcommand{\lip}[1]{\operatorname{Lip} {#1}}
\newcommand{\aplip}[1]{\operatorname{apLip} {#1}}
\newcommand{\abscurves}{\operatorname{\mathcal C_{ABS}}}	         
\newcommand{\rectcurves}{\operatorname{\mathcal C_{RECT}}}				
\newcommand{\allcurves}{\operatorname{\mathcal C}}
\newcommand{\bigcurves}[1]{\operatorname{\mathcal C}_{#1}}
\newcommand{\acon}[3]{\operatorname{\mathcal A}(#1,#2,#3)}															
\newcommand{\aacon}{\mathcal A}
\newcommand{\charfcn}[1]{\mathbf{1}_{#1}}														
\newcommand{\restr}[2][\big]{\kern -.1em #1|_{#2}}									
\newcommand{\mrestrict}[1]{\lfloor_{#1}}           									
\newcommand{\restrict}[2][\big]{\kern -.1em #1|_{#2}}
\renewcommand{\mod}[1][\expon]{\operatorname{Mod}_{#1}}                    
\newcommand{\grad}[1]{g_{#1}}
\newcommand{\jac}[1][\map]{J_{#1}}                                        
\newcommand{\arc}[1]{{#1}^{s}}
\newcommand{\cheegersob}[2]{H^{#1,#2}}
\newcommand{\newtonsob}[2]{N^{#1,#2}}
\newcommand{\newtonsobloc}[2]{N_{\operatorname{loc}}^{#1,#2}}
\theoremstyle{definition}
\newtheorem{definition}{Definition}[section]
\theoremstyle{plain}
\newtheorem{lemma}[definition]{Lemma}
\newtheorem{theorem}[definition]{Theorem}
\newtheorem{corollary}[definition]{Corollary}
\newtheorem{proposition}[definition]{Proposition}
\theoremstyle{remark}
\newtheorem{remark}[definition]{Remark}
\crefname{chapter}{Chapter}{Chapters}
\crefname{section}{Section}{Sections}
\crefname{subsection}{Section}{Sections}
\crefname{subsubsection}{Section}{Sections}
\crefname{theorem}{Theorem}{Theorems}
\crefname{lemma}{Lemma}{Lemmas}
\crefname{proposition}{Proposition}{Propositions}
\crefname{corollary}{Corollary}{Corollaries}
\crefname{definition}{Definition}{Definitions}
\crefname{remark}{Remark}{Remarks}
\crefname{example}{Example}{Examples}
\crefname{observation}{Observation}{Observations}
\begin{document}

\title[Geometric and analytic quasiconformality]{Geometric and analytic quasiconformality in metric measure spaces}
\author{Marshall Williams}
\address{Department of Mathematics, Statistics, and Computer Science \\University of Illinois at Chicago\\}
\thanks{Partially supported under NSF awards \#0602191, \#0353549 and \#0349290.}

\subjclass[2010]{Primary 30L10.}

\begin{abstract}
We prove the equivalence between geometric and analytic definitions of quasiconformality for a homeomorphism $\map\colon X\rightarrow Y$ between arbitrary locally finite separable metric measure spaces, assuming no metric hypotheses on either space.  When $X$ and $Y$ have locally $Q$-bounded geometry and $Y$ is contained in an Alexandrov space of curvature bounded above, the sharpness of our results implies that, as in the classical case, the modular and pointwise outer dilatations of $\map$ are related by $K_O(\map)= \esssup  H_O(x,\map)$.   
\end{abstract}

\maketitle

\section{Introduction}
\label{introduction}
In the last few decades, there has been an increasing interest in 
the extension of the theory of quasiconformal mappings to metric measure spaces.  Let $\map\colon X\rightarrow Y$ be a homeomorphism between metric measure spaces $(X,\meas)$ and $(Y,\mapmeas)$.  
Heinonen, Koskela, Shanmugalingam, and Tyson \cite[Theorem 9.8]{HKST} proved that if $X$ and $Y$ have locally $Q$-bounded geometry (i.e., loosely speaking, they are uniformly locally $Q$-regular and $Q$-Loewner), then the usual definitions (metric, geometric, and analytic) of quasiconformality are equivalent, quantitatively, to each other and to local quasisymmetry.  In this generality, the analytic definition is formulated via the Newton-Sobolev classes $\newtonsobloc{1}{Q}(X,Y)$ introduced in \cite{HKST}.

Without the Poincar\'e inequality, the equivalence of the definitions breaks down. For example, if $X$ and $Y$ have no rectifiable curves, then the geometric definition becomes vacuous.  Still, some relationships do persist between the different notions.  Tyson \cite{Tyson} proved that if $X$ and $Y$ are $Q$-regular, then quasisymmetry implies the geometric definition.  Newton-Sobolev regularity was proved for quasisymmetric mappings in \cite[Theorem 8.8]{HKST}, and generalized to metrically quasiconformal mappings by Balogh, Koskela, and Rogovin \cite{BaloghKoskelaRogovin}.  The latter result yields  the lower half of the geometric definition as well --- the ``$K_O$-inequality''\cite[Remark 4.3]{BaloghKoskelaRogovin}.

The purpose of this paper is to generalize and sharpen the equivalence between the geometric and analytic definitions of quasiconformality.  

\subsection*{General equivalence of the definitions.}
Our main result is that the analytic definition, formulated in terms of the minimal upper gradient $\grad{\map}$ and the volume derivative $\jac$, is precisely equivalent to the $K_O$-inequality, in very great generality.
\begin{theorem}
\label{quasiconformal}
Let $Q>1$, let $X$ and $Y$ be separable, locally finite metric measure spaces, and let $\map\colon X\rightarrow Y$ be a homeomorphism.  Then the following two conditions are equivalent, with the same constant $K$.
\begin{enumerate}[(I)]
\item \label{adef}
$\map\in\newtonsobloc{1}{Q}(X,Y)$, and for $\meas$-almost every $x\in X$, 
\begin{equation*}
\grad{\map}(x)^Q\leq K \jac(x)\text{.}
\end{equation*}
\item \label{gdef} 
For every family $\Gamma$ of curves in $X$,
\begin{equation*}
\mod(\Gamma)\leq K \mod(\map(\Gamma))\text{.}
\end{equation*}
\newcounter{tempenumi}
\setcounter{tempenumi}{\value{enumi}}
\end{enumerate}
\end{theorem}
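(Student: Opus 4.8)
\textbf{(\ref{adef}) $\Rightarrow$ (\ref{gdef}):} The plan is to run the metric version of the classical change-of-variables estimate. Let $\Gamma$ be a curve family in $X$ and let $\rho\ge 0$ be a Borel function on $Y$ admissible for $\map(\Gamma)$. Since $\map\in\newtonsobloc{1}{Q}(X,Y)$ with $\grad{\map}\in\lloc{Q}(X)$, Fuglede's lemma together with the definition of a $Q$-weak upper gradient yields a family $\Gamma_0$ with $\mod(\Gamma_0)=0$ such that every rectifiable $\gamma\in\Gamma\setminus\Gamma_0$ satisfies $\int_\gamma\grad{\map}\,ds<\infty$, has rectifiable image $\map\circ\gamma$, and obeys the substitution inequality $\int_{\map\circ\gamma}\rho\,ds\le\int_\gamma(\rho\circ\map)\,\grad{\map}\,ds$. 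Hence $(\rho\circ\map)\,\grad{\map}$ is admissible for $\Gamma\setminus\Gamma_0$, and chaining the definition of modulus, the pointwise hypothesis $\grad{\map}^Q\le K\jac$, and the change-of-variables inequality $\int_X(u\circ\map)\,\jac\,d\meas\le\int_Y u\,d\mapmeas$ (valid for Borel $u\ge 0$, since $\jac$ is the $\meas$-absolutely continuous part of the pullback measure $\pullmeas$), one gets
\[
\mod(\Gamma)=\mod(\Gamma\setminus\Gamma_0)\le\int_X(\rho\circ\map)^Q\,\grad{\map}^Q\,d\meas\le K\int_X(\rho\circ\map)^Q\,\jac\,d\meas\le K\int_Y\rho^Q\,d\mapmeas\text{.}
\]
Infimizing over $\rho$ gives $\mod(\Gamma)\le K\,\mod(\map(\Gamma))$, with no loss in $K$.

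\textbf{(\ref{gdef}) $\Rightarrow$ (\ref{adef}), regularity:} The first goal is $\map\in\newtonsobloc{1}{Q}(X,Y)$, i.e.\ a $Q$-weak upper gradient of $\map$ lying in $\lloc{Q}(X)$. The key point is that (\ref{gdef}) transports ``for $\mod$-a.e.\ curve'' statements from $Y$ back to $X$: applied to the family of rectifiable curves in $X$ whose $\map$-image is non-rectifiable --- mapped into the modulus-zero family of non-rectifiable curves in $Y$ --- it shows $\map$ sends $\mod$-a.e.\ rectifiable curve to a rectifiable one, and a refinement gives absolute continuity of $\map$ along $\mod$-a.e.\ curve. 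I would then test (\ref{gdef}) against the curves in a relatively compact open $\Omega\subset X$ whose image has length at least $t$, using the admissible weight $\charfcn{\map(\Omega)}$ on $Y$, to see that this family has $\mod$ at most $K\mapmeas(\map(\Omega))t^{-Q}$; the standard passage from such distribution bounds to Newtonian regularity then produces $g\in\lloc{Q}(X)$ dominating the length distortion of $\map$ along $\mod$-a.e.\ curve, hence a $Q$-weak upper gradient of $\map$. Let $\grad{\map}$ be the minimal one; by minimality it vanishes $\meas$-a.e.\ on any set that $\mod$-a.e.\ curve avoids, so the remaining analysis localizes to the part of $X$ carrying rectifiable curves.

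\textbf{(\ref{gdef}) $\Rightarrow$ (\ref{adef}), the sharp pointwise bound:} For a curve family $\Gamma$ with $0<\mod(\Gamma)<\infty$ whose curves essentially lie in $\{\grad{\map}>0\}$, let $\rho_\Gamma$ be its extremal function, unique because $Q>1$. Since $\grad{\map}$ equals the metric speed of $\map\circ\gamma$ for $\mod$-a.e.\ curve $\gamma$, the function $(\rho_\Gamma/\grad{\map})\circ\map^{-1}$ --- set to $0$ across a $\meas$-null carrier of the singular part of $\pullmeas$ --- is admissible for $\mod$-a.e.\ curve in $\map(\Gamma)$, so the pullback-measure change of variables gives $\mod(\map(\Gamma))\le\int_X(\rho_\Gamma/\grad{\map})^Q\,\jac\,d\meas$. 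Combining this with (\ref{gdef}) and $\mod(\Gamma)=\int_X\rho_\Gamma^Q\,d\meas$ yields, for \emph{every} such $\Gamma$,
\[
\int_X\rho_\Gamma^Q\,d\meas\le K\int_X\rho_\Gamma^Q\,\frac{\jac}{\grad{\map}^Q}\,d\meas\text{.}
\]
If $A:=\{\grad{\map}^Q>(K+\varepsilon)\jac\}$ had positive measure for some $\varepsilon>0$, the plan is to reach a contradiction by choosing $\Gamma$ so that $\rho_\Gamma$ concentrates on $A$: since $\grad{\map}>0$ on $A$, the set $A$ carries a positive-modulus collection of curves, and --- because curves may cross $A$ without being contained in it --- one extracts the required $\Gamma$ via a V\"ais\"al\"a-type covering of the density points of $A$ by small balls on which $A$ has density close to $1$, building $\rho_\Gamma$ from the corresponding condenser extremals, the $o(1)$ errors disappearing as the cover is refined. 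Letting $\varepsilon\downarrow 0$ gives $\grad{\map}^Q\le K\jac$ $\meas$-a.e., which with the regularity step is (\ref{adef}), with the same constant $K$.

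\textbf{Expected main obstacle:} I expect direction (\ref{adef})$\Rightarrow$(\ref{gdef}) to be routine, a matter of assembling standard lemmas. The real difficulty is the converse, and concretely the two ``realization'' steps --- manufacturing an honest $L^Q$ upper gradient from a purely modular hypothesis, and matching $\mod(\Gamma)$ to $\int_A\grad{\map}^Q$ tightly enough that the final estimate loses \emph{nothing} in the constant. This sharpness is what forces one to argue through extremal functions (hence the hypothesis $Q>1$) and through the minimality of $\grad{\map}$, rather than through any Poincar\'e-type comparison; and since the spaces are only locally finite --- no doubling, no Lebesgue differentiation theorem, and possibly curve-free regions where both sides of the inequality degenerate --- the covering arguments underpinning both realization steps are the genuinely delicate point.
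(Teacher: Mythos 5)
Your direction \eqref{adef}$\Rightarrow$\eqref{gdef} is exactly the paper's argument and is fine. The converse is where you diverge from the paper, and your version has one genuine error and one large unproved step. The error is in the ``sharp pointwise bound'' paragraph: you claim $(\rho_\Gamma/\grad{\map})\circ\map^{-1}$ is admissible for $\map(\Gamma)$ because ``$\grad{\map}$ equals the metric speed of $\map\circ\gamma$ for $\mod$-a.e.\ curve.'' But the upper gradient inequality only gives $\grad{\map}(\gamma(t))\,v_\gamma'(t)\ge v_{\map(\gamma)}'(t)$, which is the \emph{wrong} direction: it yields $\int_{\map(\gamma)}(\rho_\Gamma/\grad{\map})\circ\map^{-1}\,ds\le\int_\gamma\rho_\Gamma\,ds$, so admissibility of $\rho_\Gamma$ for $\Gamma$ says nothing about admissibility of the transported function for $\map(\Gamma)$. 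Equality of the minimal upper gradient with the metric speed along a.e.\ curve is false in general; minimality is an a.e.-pointwise statement obtained from the lattice property, not an identity along individual curves. Moreover, your contradiction argument on $A=\{\grad{\map}^Q>(K+\varepsilon)\jac\}$ leans on Lebesgue density points and a V\"ais\"al\"a-type covering --- precisely the tools you yourself note are unavailable without a doubling measure. Separately, the ``standard passage from distribution bounds to Newtonian regularity'' that you invoke for the regularity step is, in this generality, the paper's main technical result (\cref{modgradthm}): $\dfn\in\newtonsob{1}{Q}(X,Z)$ iff $\liminf_{\epsilon\to 0}\epsilon^Q\mod(\dfn^{-1}(\bigcurves{\epsilon}(Z)))<\infty$, in which case the liminf is a limit equal to $\int_X\grad{\dfn}^Q\,d\meas$. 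Its proof (minimal almost-admissible functions for the families $\dfn^{-1}(\bigcurves{\epsilon_n}(Z))$, Mazur's lemma, Fuglede's theorem, and a chaining argument along each good curve) is not off-the-shelf; the author notes it appears new even for $\BBR^n$. Note also that the relevant families are curves whose \emph{images have endpoints} $\epsilon$-separated, not curves whose images have length $\ge t$; the endpoint condition is what matches the upper gradient inequality and makes the constant exact.

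Once \cref{modgradthm} is available, the paper's route to the pointwise bound is far shorter than yours and needs no extremal functions, no density points, and no contradiction. Condition \eqref{gdef} applied to $\bigcurves{\epsilon}(V)$, with test function $\epsilon^{-1}\charfcn{V}$, gives $\mod(\map^{-1}(\bigcurves{\epsilon}(V)))\le\epsilon^{-Q}K\mapmeas(V)$ for every open $V\subset Y$. Applying \cref{modgradthm} on open sets $U$ with $\mapmeas(\map(U))<\infty$, together with the locality of minimal gradients, yields $\int_U\grad{\map}^Q\,d\meas\le K\pullmeas(U)$ for all open, hence by Borel regularity all Borel, sets $U$; the inequality $\grad{\map}^Q\le K\jac$ a.e.\ is then immediate from the definition of the Radon--Nikodym derivative. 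I would redirect your effort to proving the modulus-to-gradient identity of \cref{modgradthm}; everything else in \eqref{gdef}$\Rightarrow$\eqref{adef} follows from it by soft measure theory, with no loss in $K$.
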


The fact that condition \eqref{adef} implies \eqref{gdef} is somewhat standard, and has been alluded to by others \cite[Remark 4.3]{BaloghKoskelaRogovin}, though we prove it for completeness, as we know of no proof in the literature for metric spaces.  The substantive part of \cref{quasiconformal} is the converse implication.  This has only been proved sharply in the classical case;  the analogous result for $Q$-bounded geometry, \cite[Theorem 9.8]{HKST}, is quantitative, and is proved indirectly via the metric definition and quasisymmetry.  Our proof is based on a characterization of the $L^p$ norm of $\grad{\map}$ via the modulus of certain curve families, and requires no metric assumptions on either $X$ or $Y$.  The argument appears to be new even for $X=Y=\BBR^n$.

A key difference from the Euclidean, and even Loewner, setting is that in our generality, \cref{quasiconformal} is fundamentally one-sided; the equivalent conditions in the theorem typically do not imply the reverse ``$K_I$-inequality'', even when $X$ and $Y$ are $Q$-regular.  We discuss counterexamples in \cref{onesided} below.  

\subsection*{Annular quasiconformality and Tyson's Theorem.}
Under only the additional assumption of a doubling condition on the measure $\mapmeas$, 
we  show that infinitesimal control of the modulus of certain annular condensers implies  conditions \eqref{adef} and \eqref{gdef}.  Recall that $\mapmeas$ is doubling if there is a constant $C>0$ such that
\begin{equation*}
\mapmeas(B(y,2r))\leq C\mapmeas(B(y,r))
\end{equation*}
is satisfied for every $y\in Y$ and $r>0$.
For any metric space $Z$, any $z\in Z$, and any $s>r>0$, we define the \textit{annular condenser} $\acon{z}{r}{s}$ to be the family of curves intersecting both $B_r(z)$ and $Z\backslash B_s(z)$.
\begin{theorem}
\label{annularquasiconformal}
Let $Q$, $X$, $Y$ and $\map$ be as in \cref{quasiconformal}, and suppose that the measure $\mapmeas$ is doubling.   Then  conditions \eqref{adef} and \eqref{gdef} 
are quantitatively equivalent to
\begin{enumerate}[(I)]
\setcounter{enumi}{\value{tempenumi}}
\item \label{cdef}
There is some $\lambda>1$ and $K'\geq 1$ such that for every $y\in Y$,
\begin{equation*}
\liminf_{r\rightarrow 0} \frac{r^Q\mod(\map^{-1}(\acon{y}{r}{\lambda r}))}{\mapmeas(B_r(y))}\leq K'\text{.}
\end{equation*}\setcounter{tempenumi}{\value{enumi}}
\end{enumerate}
\end{theorem}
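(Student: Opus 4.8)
The plan is to combine \cref{quasiconformal}, which already gives \eqref{adef}$\Leftrightarrow$\eqref{gdef}, with two further implications: \eqref{gdef}$\Rightarrow$\eqref{cdef}, which is elementary, and \eqref{cdef}$\Rightarrow$\eqref{adef}, which is the substance of the theorem and the only place where the doubling of $\mapmeas$ is used in an essential way.

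For \eqref{gdef}$\Rightarrow$\eqref{cdef}, take $\lambda=2$. Any curve meeting $B_r(y)$ and $Y\setminus B_{2r}(y)$ contains a subcurve of length at least $r$ lying in $\{z:d(z,y)\le 2r\}\subseteq B_{4r}(y)$, so $r^{-1}\charfcn{B_{4r}(y)}$ is admissible for $\acon{y}{r}{2r}$ and hence $\mod(\acon{y}{r}{2r})\le r^{-\expon}\mapmeas(B_{4r}(y))$. As $\map$ is a homeomorphism, $\map(\map^{-1}(\acon{y}{r}{2r}))=\acon{y}{r}{2r}$, so applying \eqref{gdef} to $\Gamma=\map^{-1}(\acon{y}{r}{2r})$ and then the doubling inequality twice gives, for all $y\in Y$ and $r>0$,
\begin{equation*}
\frac{r^{\expon}\mod(\map^{-1}(\acon{y}{r}{2r}))}{\mapmeas(B_r(y))}\le K\,\frac{r^{\expon}\mod(\acon{y}{r}{2r})}{\mapmeas(B_r(y))}\le K\,\frac{\mapmeas(B_{4r}(y))}{\mapmeas(B_r(y))}\le KC^{2}\text{,}
\end{equation*}
where $C$ is the doubling constant; this is stronger than \eqref{cdef}, which it implies with $K'=KC^{2}$.

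For \eqref{cdef}$\Rightarrow$\eqref{adef}, by \cref{quasiconformal} it is enough to verify \eqref{adef}, and for this it suffices to produce, for every bounded open $U\subseteq Y$, an $L^{\expon}(\meas)$ upper gradient $\rho_U$ of $\map$ on $\map^{-1}(U)$ with
\begin{equation*}
\int_{\map^{-1}(U)}\rho_U^{\expon}\,d\meas\le C''\mapmeas(U)\text{,}
\end{equation*}
$C''$ depending only on $\expon$, $\lambda$, $K'$ and the doubling constant: letting $U$ exhaust $Y$ yields $\map\in\newtonsobloc{1}{\expon}(X,Y)$, and since $\grad{\map}\le\rho_U$ $\meas$-a.e.\ on $\map^{-1}(U)$ the bound gives $\int_A\grad{\map}^{\expon}\,d\meas\le C''\pullmeas(A)$ for every Borel $A$ (here $\pullmeas$ is the pullback measure and $\mapmeas(U)=\pullmeas(\map^{-1}(U))$), whence $\grad{\map}^{\expon}\le C''\jac$ $\meas$-almost everywhere --- no hypothesis on $\meas$ being used at this point. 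To build $\rho_U$ one splices together the infinitesimal data of \eqref{cdef}: for each $y\in Y$ pick, by \eqref{cdef}, a small good radius $r=r(y)$ with $r^{\expon}\mod(\map^{-1}(\acon{y}{r}{\lambda r}))\le(K'+1)\mapmeas(B_r(y))$, together with an $L^{\expon}$ function $\rho_y\ge 0$ admissible for $\map^{-1}(\acon{y}{r}{\lambda r})$ --- by passing to minimal subcurves one may take $\rho_y$ supported in $\map^{-1}(\{z:d(z,y)\le\lambda r\})$ --- with $\int_X\rho_y^{\expon}\,d\meas\le 2(K'+1)\mapmeas(B_r(y))\,r^{-\expon}$. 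One then covers $Y$ by good-radius balls $B_{r_i}(y_i)$ and puts $\rho_U=\sup_i(\lambda-1)r_i\rho_{y_i}$. A chaining argument --- following the image of a curve from a good-radius ball containing its current point, out across the annulus $\acon{y_i}{r_i}{\lambda r_i}$ (across which $\int\rho_U$ is at least $(\lambda-1)r_i$, while the image moves at most $2\lambda r_i$), into the next ball, and iterating --- shows that a fixed multiple of $\rho_U$ is an upper gradient of $\map$; meanwhile the doubling of $\mapmeas$ converts control on the overlap of the covering into $\sum_i\mapmeas(B_{r_i}(y_i))$ being comparable to $\mapmeas(U)$, giving the displayed estimate.

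The main obstacle is precisely this globalization. The bound in \eqref{cdef} is infinitesimal, realized only along a sequence of scales depending on the centre, so there is no uniform scale at which to cover $Y$; and since a general doubling metric measure space admits no Besicovitch covering theorem, one cannot simply extract from the good-radius balls a bounded-multiplicity cover of all of $Y$ whose measures sum. One must instead cover \emph{every} point of $Y$ --- so that the chain applies to every curve, not merely to $\mapmeas$-a.e.\ one --- while keeping $\sum_i\mapmeas(B_{r_i}(y_i))$ comparable to $\mapmeas(U)$, and ensure that the telescoping reaches the terminal point of each curve. This is the technical heart of the proof; it is handled by Vitali-type covering arguments for the doubling measure $\mapmeas$ together with the characterization of $\|\grad{\map}\|_{\expon}$ via the modulus of curve families established in the proof of \cref{quasiconformal}, which reduces the task to comparing general curve families with annular condensers --- a comparison that, once again, is what the doubling hypothesis supplies.
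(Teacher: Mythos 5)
Your overall architecture matches the paper's: \eqref{gdef}$\Rightarrow$\eqref{cdef} by testing \eqref{gdef} on annular condensers (your computation there is fine), and the substance is \eqref{cdef}$\Rightarrow$\eqref{adef} via a covering-plus-chaining argument. But the latter, as written, has a genuine gap, and the specific construction you propose would fail. You build a single function $\rho_U=\sup_i(\lambda-1)r_i\rho_{y_i}$ from a single cover of $U$ by good-radius balls and claim that a fixed multiple of it is an upper gradient of $\map$ on $\map^{-1}(U)$. This cannot work: the upper gradient inequality must hold for \emph{every} rectifiable curve, in particular for curves whose images have diameter far smaller than every radius $r_i$ occurring in your cover; such an image crosses none of the annuli $\acon{y_i}{r_i}{\lambda r_i}$, so the chain never starts and $\int_\gamma\rho_U\,ds$ admits no lower bound (indeed each $\rho_{y_i}$ may be supported on the preimage of its annulus alone). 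Since the good radii supplied by \eqref{cdef} are only infinitesimally available --- a $\liminf$ along scales depending on the centre --- no single cover sees all scales. You flag both this issue (``ensure that the telescoping reaches the terminal point'') and the summability of $\sum_i\mapmeas(B_{r_i}(y_i))$, but you defer their resolution to unspecified ``Vitali-type covering arguments,'' and that deferred step is precisely the content of the theorem.

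The fix, which is the paper's proof, is to aim not at the upper-gradient inequality directly but at the intermediate condition \eqref{mdef} from the proof of \cref{quasiconformal}: for each \emph{fixed} $\epsilon>0$ and open $V\subseteq Y$, $\mod(\map^{-1}(\bigcurves{\epsilon}(V)))\leq CK'\epsilon^{-Q}\mapmeas(V)$. Fixing the scale resolves both difficulties at once. One covers $V$ by good-radius balls $B_n=B_{r_n}(y_n)\subset V$ with $4\lambda r_n<\epsilon$ such that, by the basic $5r$-covering lemma (no Besicovitch theorem is needed), the balls $B_n/5$ are pairwise disjoint; disjointness plus doubling gives $\sum_n\mapmeas(B_n)\leq C\mapmeas(V)$. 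And the chain for $\gamma\in\map^{-1}(\bigcurves{\epsilon}(V))$ need not reach $\gamma(b)$: each link satisfies $\diam(\map(\gamma_i))\leq 2\lambda r_{n_i}<\epsilon/2$ and one stops only when the remaining tail has image of diameter at most $\epsilon/2$, so the accumulated diameters already sum to at least $\epsilon/2$, which yields admissibility of $4\epsilon^{-1}\lambda\sup_n r_n\rho_n$. The passage from \eqref{mdef} to an actual upper gradient with the correct norm is then exactly \cref{modgradthm} (letting $\epsilon\to 0$, with Mazur's lemma and Fuglede's theorem), already carried out in the proof of \cref{quasiconformal}. Your closing sentence gestures toward this, but without fixing the scale and rerouting through \eqref{mdef} the argument does not close.
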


As we discuss in \cref{applicationsremark}, there are a number of hypotheses on $\map$ that guarantee condition \eqref{cdef}. These include ``annular'' and ``ring'' definitions of quasiconformality. Quasisymmetry also implies condition \eqref{cdef}, so that \cref{annularquasiconformal} gives a short (though not entirely new in concept) proof for Tyson's theorem on the geometric quasiconformality of quasisymmetric mappings.

\subsection*{Locally $Q$-bounded geometry and sharp equivalence of dilatations.}
In general, the minimal weak upper gradient $\grad{\map}$ need not carry much geometric information; if $X$ has no rectifiable curves, for example, then $\grad{\map}=0$.  If $X$ has locally $Q$-bounded geometry, however, it follows from results of Cheeger \cite{Cheeger} that $\grad{\map}$ is comparable to the pointwise Lipschitz constant
\begin{equation*}
\lip{\map}(x) = \limsup_{x'\rightarrow x, x'\neq x} \frac{ |\map(x')- \map(x)|}{|x'-x|} \text{.}
\end{equation*}
As a result, the inequality in \eqref{adef} is quantitatively equivalent to the inequality
\begin{equation*}
\lip{\map}(x)^Q\leq K'' \jac(x)\text{,}
\end{equation*}
which is in fact how the analytic definition appears in \cite[Theorem 9.8]{HKST}.  Moreover, if the target $Y$ has curvature bounded above in the sense of Alexandrov, then results of Cheeger \cite{Cheeger}, Keith \cite{KeithPI}, and Ohta \cite{Ohta} show (\cref{piqslip} below) that $\lip{\map}=\grad{\map}$.  

The modular outer dilatation $K_O(\map)$ of $\map$ is the infimal value of $K$ satisfying condition \eqref{gdef}.  For each $x\in X$, define the pointwise outer dilatation $H_O(x,\map)$ of $\map$ at $x$ to be
\[
H_O(x,\map)=\limsup_{x'\rightarrow x, x'\neq x} \frac{|\map(x')-\map(x)|^Q\meas(B_{|x'-x|}(x))}{|x'-x|^Q\mapmeas(\map(B_{|x'-x|}(x)))}\text{.}
\]
The latter definition is motivated by the outer dilatation $H_O(\map'(x))$ of the derivative in the classical case (cf.\ \cite[Definition 14.1]{Vaisala}). At almost every $x\in X$, $H_O(x,\map)=\lip{\map}(x)^Q/\jac(x)$ (see the proof of \cref{sharpening} in \cref{lipminimal}).  The equivalence of the definitions in \cite[Theorem 9.8]{HKST} therefore indicates a quantitative relationship between $K_O(\map)$ and $\esssup H_O(x, \map)$.  Via \cref{quasiconformal} and the comparability of $\grad{\map}$ and $\lip{\map}$, we obtain a more precise statement relating  the two dilatations.
\begin{theorem}
\label{sharpening}
Let $X$ and $Y$ be metric measure spaces of locally $Q$-bounded geometry, for $Q>1$, and let $\map\colon X\rightarrow Y$ satisfy the conditions in \cref{quasiconformal}.  Then 
\[
\frac{1}{C}\esssup_{x\in X} H_O(x,\map)\leq K_O(\map)\leq \esssup_{x\in X} H_O(x,\map)\text{,}
\]
where $C$ is a constant depending only on the data of $X$.  If, in addition, $Y$ is isometrically contained in a locally compact, locally geodesically complete metric space of curvature bounded above, then $K_O(\map)=\esssup H_O(x,\map)$.
\end{theorem}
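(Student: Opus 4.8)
The plan is to pin down the value of $K_O(\map)$ using \cref{quasiconformal} and then to pass between $\grad{\map}$ and $\lip{\map}$ by means of Cheeger's theory, at which point the definition of $H_O(\cdot,\map)$ gives the result. First I would note that, by definition, $K_O(\map)$ is the infimum of the constants $K$ satisfying \eqref{gdef}, so by \cref{quasiconformal} it is also the infimum of the constants $K$ satisfying \eqref{adef}; since $\map\in\newtonsobloc{1}{Q}(X,Y)$ and \eqref{adef} holds for some finite $K$ by hypothesis, we have $\grad{\map}=0$ $\meas$-a.e.\ on $\{\jac=0\}$, and hence
\begin{equation*}
K_O(\map)=\esssup_{x\in X}\frac{\grad{\map}(x)^{Q}}{\jac(x)}\text{,}
\end{equation*}
with the convention $0/0=0$. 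Next I would record that $\lip{\map}$ is a $Q$-weak upper gradient of $\map$: along an arclength-parametrized rectifiable curve $\gamma$ on which $\map$ is absolutely continuous, the metric speed of $\map\circ\gamma$ is at a.e.\ time bounded by $\lip{\map}(\gamma(\cdot))$, because $|\gamma(t+\varepsilon)-\gamma(t)|/\varepsilon\to 1$ at a.e.\ $t$. Thus $\grad{\map}\leq\lip{\map}$ $\meas$-a.e. On the other hand, since $X$ has locally $Q$-bounded geometry, Cheeger's differentiation theory \cite{Cheeger} provides the reverse comparison $\lip{\map}\leq C'\grad{\map}$ $\meas$-a.e., with $C'$ depending only on the data of $X$ (for the metric-space-valued map one applies this after post-composition with $1$-Lipschitz functions, or a Kuratowski embedding, of $Y$). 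Finally, as recorded before the statement and carried out in \cref{lipminimal}, $H_O(x,\map)=\lip{\map}(x)^{Q}/\jac(x)$ for $\meas$-a.e.\ $x$.

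Combining these ingredients, the formula for $K_O(\map)$ together with the two-sided estimate $\grad{\map}\leq\lip{\map}\leq C'\grad{\map}$ gives
\begin{equation*}
\frac{1}{(C')^{Q}}\esssup_{x\in X}\frac{\lip{\map}(x)^{Q}}{\jac(x)}\leq K_O(\map)\leq\esssup_{x\in X}\frac{\lip{\map}(x)^{Q}}{\jac(x)}\text{,}
\end{equation*}
and since by the last identity the two outer quantities are exactly $(C')^{-Q}\esssup_{x}H_O(x,\map)$ and $\esssup_{x}H_O(x,\map)$ (in particular the left inequality forces $\esssup_{x}H_O(x,\map)\leq (C')^{Q}K_O(\map)<\infty$), setting $C=(C')^{Q}$ proves the first assertion. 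For the final statement, if $Y$ embeds isometrically in a locally compact, locally geodesically complete space of curvature bounded above, then \cref{piqslip} --- where the results of \cite{Cheeger}, \cite{KeithPI}, and \cite{Ohta} are invoked --- gives $\lip{\map}=\grad{\map}$ $\meas$-a.e., so the displayed upper and lower bounds coincide and $K_O(\map)=\esssup_{x}H_O(x,\map)$.

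With \cref{quasiconformal}, the comparability $\grad{\map}\asymp\lip{\map}$, and the identity $H_O=\lip{\map}^{Q}/\jac$ in hand, \cref{sharpening} is itself only a short chain of inequalities, and the real content lies in the results being used. Within this proof the steps demanding genuine care --- which I would carry out in \cref{lipminimal} --- are the verification of the pointwise identity $H_O(x,\map)=\lip{\map}(x)^{Q}/\jac(x)$ at $\meas$-a.e.\ $x$ (in particular that $\jac$ is an honest volume derivative $\meas$-a.e., which involves differentiating the pulled-back measure), and the confirmation that Cheeger's comparison constant, and hence $C$, depends only on the data of $X$ and is independent of $\map$ and of $Y$.
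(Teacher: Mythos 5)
Your overall skeleton matches the paper's: identify $K_O(\map)$ with $\esssup\grad{\map}^Q/\jac$ via \cref{quasiconformal}, identify $H_O(x,\map)$ with $\lip{\map}(x)^Q/\jac(x)$ a.e.\ (which does require the Vitali/Lebesgue differentiation step you flag), and then compare $\grad{\map}$ with $\lip{\map}$. The upper bound $\grad{\map}\leq\lip{\map}$ a.e.\ is fine as you argue it.

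The genuine gap is in the reverse comparison. You assert that Cheeger's theory gives $\lip{\map}\leq C'\grad{\map}$ a.e.\ directly for the Sobolev map $\map$, but \cite[Proposition 4.26]{Cheeger} is a statement about \emph{locally Lipschitz} functions, and for a map that is merely in $\newtonsobloc{1}{Q}$ the inequality $\lip{\dfn}\leq C\grad{\dfn}$ can fail outright (\cite[Remark 2.16]{KeithPI}, cited in \cref{lipminimal}). What survives for Sobolev maps is the comparison with the \emph{approximate} pointwise Lipschitz constant, $\aplip{\map}\leq C\grad{\map}$ (\cref{aplipcomparable}, proved by exhausting $X$ by sets on which $\map$ is Lipschitz and using the McShane extension after a Kuratowski embedding --- your parenthetical about $1$-Lipschitz post-compositions addresses the metric target, not this Lipschitz-versus-Sobolev issue). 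Since $H_O$ is defined by an unrestricted $\limsup$, i.e.\ via $\lip{\map}$ rather than $\aplip{\map}$, you still need to pass from $\aplip{\map}$ back to $\lip{\map}$. The paper does this by first observing that condition \eqref{gdef} together with the locally $Q$-bounded geometry of \emph{both} $X$ and $Y$ implies, via \cite[Theorem 9.8]{HKST}, that $\map$ is locally quasisymmetric, and then invoking \cref{qslip}, which shows $\aplip{\map}=\lip{\map}$ for quasisymmetric maps. This quasisymmetry step is also a hypothesis of \cref{piqslip}, which you invoke for the equality case $K_O(\map)=\esssup H_O(x,\map)$ without having verified it; so both halves of your argument need this missing ingredient.
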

The last statement generalizes the classical equivalence between the outer dilatations \cite[Theorem 34.4]{Vaisala}.

\subsection*{Organization of the paper.}
In \cref{preliminaries} we establish notation and recall preliminary definitions and properties of curves, modulus, and upper gradients.  \cref{uganalysis} is devoted to proving a number of facts about weak upper gradients, leading up to our main technical tool, \cref{modgradthm}, which expresses the $L^p$ norm of a minimal upper gradient via curve modulus.  In \cref{quasiconformality} we prove  \cref{quasiconformal,annularquasiconformal}. In \cref{lipminimal} we prove \cref{sharpening}.

\subsection*{Acknowledgments}
I thank my advisor, Mario Bonk, as well as Pekka Pankka and Stefan Wenger, for reviewing early versions of some of these results, and for many helpful discussions.  I am also indebted greatly to my late advisor, Juha Heinonen, for much guidance and encouragement in entering this field. 
\section{Preliminaries and Notation.}
\label{preliminaries}
\renewcommand{\expon}{p}
Throughout this paper, $X = (X,\dist_X,\meas)$ and $Y = (Y,\dist_Y,\mapmeas)$ are separable metric measure spaces, and $Z=(Z,\dist_Z)$ is an arbitrary metric space.  Here $\meas$ and $\mapmeas$ are assumed to be locally finite Borel regular outer measures on $X$ and $Y$ that are positive on open sets.
We  write $|x_1-x_2|=\dist(x_1,x_2)$ when the metric is clear from context. We also sometimes write $d_{x_1}(x_2)=|x_1-x_2|$.  We denote by $B_r(x)$ 
the closed ball of radius $r$ centered at $x$.  

The characteristic function of a subset $A\subseteq X$ is denoted $\charfcn{A}$.

Unless otherwise specified, $\map\colon X\rightarrow Y$ is a homeomorphism, and $\dfn\colon X\rightarrow Z$ is a Borel map.  If $\meas$ is finite, we say $\dfn\in L^p(X,Z)$ if $d_z\circ \dfn\in L^p(X)$ for some (and therefore every) $z\in Z$, and we define $\lloc{p}(X,Z)$ similarly.

By $\pullmeas$ we denote the pushforward of $\mapmeas$ by $\map^{-1}$; that is, $\pullmeas(A) = \mapmeas(\map(A))$ for every $A \subseteq X$.  The restriction of $\meas$ to $A$ is denoted $\meas\mrestrict{A}$. The Radon-Nikodym derivative of $\pullmeas$ with respect to $\meas$ is $\jac$.

A \textit{curve} is a continuous map $\gamma\colon [a,b]\rightarrow X$, where $[a,b]\subset \BBR$ is a closed interval.  When there is no chance for confusion, we always use $[a,b]$ to denote the parametrizing interval of $\gamma$.  A \textit{subcurve} of $\gamma$ is the restriction $\restrict[\gamma]{[c,d]}$ of $\gamma$ to a closed subinterval $[c,d]\subseteq [a,b]$. We denote by $\allcurves(X)$ the set of curves in $X$.

\subsection*{Arc length}

Following \cite{Duda}, we define the variation function $v_\gamma\colon [a,b] \rightarrow [0,\infty]$ by 
\begin{equation*}
v_\gamma(t)=\sup_{a\leq a_1\leq b_1\leq\dotsb\leq a_n\leq b_n\leq t} \sum_{i=1}^n |\gamma(b_i)-\gamma(a_i)|\text{.}
\end{equation*}

The \textit{length} $l(\gamma)$ of $\gamma$ is  $l(\gamma)=v_\gamma(b)$.  If $\gamma$ has finite length, we say that $\gamma$ is \textit{rectifiable}, and denote the set of rectifiable curves in $X$ by $\rectcurves(X)$.  The arc-length parametrization of such a curve is $\arc{\gamma}\colon [0,l(\gamma)]\rightarrow X$, and is defined uniquely by the equation $\arc{\gamma}\circ v_\gamma = \gamma$. The integral of a Borel function $\rho$ along  $\gamma$ is 
\[ \int_\gamma \rho \,ds = \int_0^{l(\gamma)} \rho(\arc{\gamma}(t))\,dt\text{.}\]

A curve $\gamma$ is \textit{absolutely continuous} if 
$v_\gamma$ is absolutely continuous. Via the chain rule, we then have
\begin{equation}
\label{nonparam}
\int_\gamma \rho \, ds
=\int_a^b \rho(\gamma(t)) v_\gamma'(t)\,dt\text{.}
\end{equation}
We denote the family of absolutely continuous curves by $\abscurves(X)\subset\rectcurves(X)$.  Note that $\arc{\gamma}\in\abscurves(X)$, always.

For $\gamma\in \abscurves(X)$, the metric derivative studied in \cite{AmbrosioKirchheimRect} and \cite{Kirchheim} coincides with $v_\gamma'$ \cite[Remark 3.4]{Duda}, which immediately implies the following useful fact.

\begin{lemma}
\label{curverestrict}
Let  $\gamma_1,\gamma_2\in\abscurves(X)$, with each curve parametrized by $[a,b]$, and suppose $\restrict[\gamma_1]{A}=\restrict[\gamma_2]{A}$ for a measurable subset $A\subset [a,b]$.  Then $v_{\gamma_1}'(t)=v_{\gamma_2}'(t)$ for almost every $t\in A$.  
\end{lemma}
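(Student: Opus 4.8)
The plan is to deduce the lemma directly from the identification $v_\gamma'=\metricd_\gamma$ recalled just above, where $\metricd_\gamma(t)=\lim_{s\to 0}|\gamma(t+s)-\gamma(t)|/|s|$ is the metric derivative, which exists for almost every $t$ when $\gamma\in\abscurves(X)$. Let $A_0$ be the set of $t\in A$ at which $A$ has Lebesgue density $1$, at which $v_{\gamma_1}$ and $v_{\gamma_2}$ are both differentiable, and at which $\metricd_{\gamma_1}(t)$ and $\metricd_{\gamma_2}(t)$ both exist and agree with $v_{\gamma_1}'(t)$ and $v_{\gamma_2}'(t)$ respectively. Since almost every point of $A$ enjoys each of these properties, $A\setminus A_0$ is null, so it suffices to prove $v_{\gamma_1}'(t)=v_{\gamma_2}'(t)$ for $t\in A_0$.

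Fix $t\in A_0$. The first step is the elementary density estimate $\dist(t+s,A)=o(|s|)$ as $s\to 0$: were this false, there would be $c>0$ and a sequence $s_n\to 0$ with an interval of length $\geq c|s_n|$ about $t+s_n$ disjoint from $A$; since that interval lies in $[t-2|s_n|,t+2|s_n|]$ for $n$ large, this would contradict the density of $A$ at $t$. Hence for each sufficiently small $s$ we may pick $t'=t'(s)\in A$ with $|t'-(t+s)|=o(|s|)$.

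Now $t,t'\in A$, so $\gamma_1(t)=\gamma_2(t)$ and $\gamma_1(t')=\gamma_2(t')$, and the triangle inequality gives
\[
\bigl|\,|\gamma_1(t+s)-\gamma_1(t)|-|\gamma_2(t+s)-\gamma_2(t)|\,\bigr|\leq|\gamma_1(t+s)-\gamma_2(t+s)|\leq\sum_{i=1,2}|\gamma_i(t+s)-\gamma_i(t')|\text{.}
\]
For each $i$ one has $|\gamma_i(t+s)-\gamma_i(t')|\leq|v_{\gamma_i}(t+s)-v_{\gamma_i}(t')|$, since the right side bounds the length of the subcurve joining these two points; and because $v_{\gamma_i}$ is differentiable at $t$ while $|t'-(t+s)|=o(|s|)$, writing $v_{\gamma_i}(t+s)-v_{\gamma_i}(t')=(v_{\gamma_i}(t+s)-v_{\gamma_i}(t))-(v_{\gamma_i}(t')-v_{\gamma_i}(t))$ and expanding each difference to first order at $t$ shows $|v_{\gamma_i}(t+s)-v_{\gamma_i}(t')|=o(|s|)$. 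Dividing the displayed inequality by $|s|$ and letting $s\to 0$ then yields $\metricd_{\gamma_1}(t)=\metricd_{\gamma_2}(t)$, hence $v_{\gamma_1}'(t)=v_{\gamma_2}'(t)$. The only points requiring genuine care are this density estimate and the first-order expansion controlling the arc-length cost of replacing $t+s$ by the nearby point $t'\in A$; everything else is immediate from the stated properties of $v_\gamma$ and of the metric derivative.
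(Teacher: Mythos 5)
Your proof is correct and follows the same route the paper intends: \cref{curverestrict} is stated as an immediate consequence of the identification of $v_\gamma'$ with the metric derivative for absolutely continuous curves, and you have simply supplied the density-point argument that makes that implication precise. The only remark is that your detour through the nearby points $t'=t'(s)\in A$ and the first-order expansion of $v_{\gamma_i}$ is more than is needed: since the metric derivative exists as a full limit at each $t\in A_0$, one may evaluate both limits along a single sequence $s_n\rightarrow 0$ with $t+s_n\in A$ (available at any density point of $A$), where the two difference quotients coincide outright because $\gamma_1$ and $\gamma_2$ agree at $t$ and at $t+s_n$.
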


We say a function $\dfn$ is continuous along the curve $\gamma$ if $\dfn(\gamma)= \dfn\circ\gamma$ is continuous. 
When $\gamma$ is rectifiable, we say $\dfn$ is \textit{absolutely continuous on $\gamma$} if $\dfn(\arc{\gamma})\in\abscurves(Z)$.  Note that if $\dfn$ is absolutely continuous on $\gamma$, and $\gamma\in\abscurves(X)$, then $\dfn(\gamma)\in \abscurves(Z)$.

A detailed discussion of arc-length can be found in \cite[Chapter 1]{Vaisala}.  For much more on absolutely continuous maps into metric spaces, see \cite{Duda}.

\subsection*{Curve modulus.}
\renewcommand{\expon}{p}

Let $\Gamma$ a family of curves in $X$.  A Borel function $\rho\colon X\rightarrow [0,\infty]$ is said to be \textit{admissible} for $\Gamma$ if for every rectifiable $\gamma\in \Gamma$,
\begin{equation}
\label{admissibility}
\int_\gamma \rho\,ds\geq 1\text{.}
\end{equation}
The \textit{$p$-modulus} of $\Gamma$ is 
\begin{equation*}
\mod(\Gamma) = \inf \left\{ \int_X \rho^p\,d\meas:\text{$\rho$ is admissible for $\Gamma$.} \right\} \text{.}
\end{equation*}

A property holds for \textit{$p$-almost every} curve, or simply almost every curve if $p$ is understood, if the property fails only on a family $\Gamma$ such that $\mod(\Gamma)=0$.  If $A\subset X$,  and $\gamma$ is rectifiable, $\gamma$ \textit{has positive length} in $A$ if $\lebesgue{1}({\arc{\gamma}}^{-1}(A))>0$, and \textit{has length $0$} in $A$ otherwise. A curve family $\Gamma$  is \textit{minorized} by $\tilde{\Gamma}$ if every curve in $\Gamma$ has a subcurve in $\tilde{\Gamma}$.

The following results are standard properties of $\mod$, which can be found, for example, in \cite[Chapter 1]{Fuglede}.
\begin{lemma}
\label{properties}
The $p$-modulus has the following properties:
\begin{enumerate}[(i)]
\item The function $\mod\colon \allcurves(X)\rightarrow [0,\infty]$ is an outer measure.
\item \label{minorize} If $\Gamma$ is minorized by $\tilde{\Gamma}$, then $\mod(\tilde{\Gamma})\geq \mod(\Gamma)$.
\item \label{intaecurve} Let $\rho\in\lloc{p}(X)$.  Then $\rho$ is integrable along almost every curve in $\allcurves(X)$. 
\item \label{fuconverge} Let $\{\rho_i\}$ be a sequence of Borel functions in $\lloc{p}(X)$, converging locally in $\lloc{p}(X)$ to $\rho$.  Then there is a subsequence $\{\rho_{i_k}\}$ such that on almost every curve $\gamma\in\rectcurves(X)$, 
\begin{equation*}
\lim_{k\rightarrow\infty}\int_\gamma |\rho_{i_k}-\rho|\,ds = 0\text{.}
\end{equation*}
\item \label{aelength0} Let $E\subset X$ with $\meas(E)=0$.  Then almost every curve has length $0$ in $A$.
\end{enumerate}
\end{lemma}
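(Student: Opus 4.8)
The plan is to recall the classical arguments behind these properties, which are carried out for $X=\BBR^n$ in \cite[Chapter 1]{Fuglede}, with a routine localization inserted to accommodate the mere local finiteness of $\meas$. I would dispatch \emph{(i)} by direct verification: $\mod(\emptyset)=0$ because the zero function is (vacuously) admissible for the empty family; monotonicity holds since an admissible function for a family remains admissible for every subfamily; and for countable subadditivity, given $\Gamma=\bigcup_i\Gamma_i$ and $\varepsilon>0$, one picks $\rho_i$ admissible for $\Gamma_i$ with $\int_X\rho_i^p\,d\meas\le\mod(\Gamma_i)+\varepsilon 2^{-i}$ and checks that the Borel function $\rho=\bigl(\sum_i\rho_i^p\bigr)^{1/p}$, which dominates every $\rho_i$ and is therefore admissible for $\Gamma$, satisfies $\int_X\rho^p\,d\meas=\sum_i\int_X\rho_i^p\,d\meas\le\sum_i\mod(\Gamma_i)+\varepsilon$ by monotone convergence. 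Part \emph{(ii)} is immediate: if $\rho$ is admissible for $\tilde\Gamma$ and $\gamma\in\Gamma$ is rectifiable, then $\gamma$ has a rectifiable subcurve $\tilde\gamma\in\tilde\Gamma$ with $\int_\gamma\rho\,ds\ge\int_{\tilde\gamma}\rho\,ds\ge 1$, so $\rho$ is admissible for $\Gamma$ and $\mod(\Gamma)\le\mod(\tilde\Gamma)$.

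For the three remaining items I would first fix, using separability and local finiteness, an increasing sequence of bounded open sets $X_1\subset X_2\subset\cdots$ with $\meas(X_j)<\infty$ and $\bigcup_j X_j=X$, chosen so that the $\lloc{p}$ functions in question restrict to genuine $L^p$ functions on each $X_j$. Since the image of any curve is compact, every curve lies in some $X_j$, and hence by \emph{(i)} it suffices in each case to control the relevant exceptional family among curves contained in a single $X_j$. For \emph{(iii)}, the family $\Gamma_j^\infty$ of curves in $X_j$ along which $\rho$ fails to be integrable has $\lambda^{-1}\rho\charfcn{X_j}$ as an admissible function for every $\lambda>0$, so $\mod(\Gamma_j^\infty)\le\lambda^{-p}\int_{X_j}\rho^p\,d\meas\to 0$ as $\lambda\to\infty$; summing over $j$ yields \emph{(iii)}. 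For \emph{(v)}, I would first enlarge $E$ to a Borel set of measure zero using Borel regularity, and then observe that $\infty\cdot\charfcn{E}$ is a Borel function with vanishing $p$-integral that is admissible for the family of curves of positive length in $E$, since for such a curve $\lebesgue{1}\bigl((\arc{\gamma})^{-1}(E)\bigr)>0$ forces $\int_\gamma\infty\cdot\charfcn{E}\,ds=\infty$; hence that family has modulus zero.

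Part \emph{(iv)} is the only step with genuine content, and is where I expect the bookkeeping to concentrate. Given $\rho_i\to\rho$ locally in $\lloc{p}(X)$, I would extract, by a diagonal argument over the sets $X_j$, a subsequence $\{\rho_{i_k}\}$ with $\sum_k\|\rho_{i_k}-\rho\|_{L^p(X_j)}<\infty$ for every $j$; then $G_j=\charfcn{X_j}\sum_k|\rho_{i_k}-\rho|$ lies in $L^p$, so by \emph{(iii)} almost every curve $\gamma$ in $X_j$ has $\int_\gamma G_j\,ds<\infty$, which forces $\int_\gamma|\rho_{i_k}-\rho|\,ds\to 0$. Taking the union over $j$ of the exceptional families and invoking countable subadditivity once more gives the conclusion for $p$-almost every curve in $X$. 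The main obstacle, such as it is, is purely organizational: the exhaustion $\{X_j\}$ must be fixed once and for all, so that a single modulus-zero family handles every curve simultaneously rather than one tailored to the (a priori unbounded) curve in question; with that in place each item reduces to the finite-measure situation treated in \cite[Chapter 1]{Fuglede}.
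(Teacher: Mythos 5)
Your proposal is correct and follows essentially the route the paper itself indicates: it gives no proof, instead citing \cite[Chapter 1]{Fuglede} for the finite-measure case and remarking that parts (iii) and (iv) extend to $\lloc{p}$ via separability, local finiteness of $\meas$, and countable subadditivity of $\mod$ --- which is exactly the exhaustion-plus-subadditivity argument you carry out. Your write-up simply makes explicit the standard Fuglede arguments and the localization the paper leaves to the reader.
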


Note that parts \eqref{intaecurve} and \eqref{fuconverge} in \cref{properties} are stated in \cite{Fuglede} only for the case where the functions $\rho_i$ are actually in $L^p(X)$, not merely in $\lloc{p}(X)$.  The statements immediately generalize, though, via the separability of $X$, local finiteness of $\meas$, and countable subadditivity of $\mod$.
\subsection*{Upper gradients}

A Borel function $g\colon X\rightarrow \BBR$ is called an \textit{upper gradient} for $\dfn$ if for every curve $\gamma\in\rectcurves{X}$, we have the inequality
\begin{equation}
\label{ugdefeq}
\int_\gamma g\,ds\geq|\dfn(\gamma(b))-\dfn(\gamma(a))|\text{.}
\end{equation}
If inequality \eqref{ugdefeq} merely holds for $p$-almost every curve, then $g$ is called a \textit{$p$-weak upper gradient} for $\dfn$.  When the exponent $p$ is clear, we omit it.

By \cite[Lemma 2.4]{KoskelaMacManus}, $\dfn$ has a weak upper gradient in $\lloc{p}(X)$ if and only if it has an actual upper gradient in $\lloc{p}(X)$.

A weak upper gradient $g$ of $\dfn$ is \textit{minimal} if  for every weak upper gradient $\tilde{g}$ of $\dfn$, $\tilde{g}\geq g$ $\meas$-almost everywhere.  If $\dfn$ has an upper gradient in $\lloc{p}(X)$, then $\dfn$ has a unique (up to sets of $\mu$-measure $0$) minimal $p$-weak upper gradient \cite[Theorem 7.16]{Hajlaszsurvey}.  In this situation, we denote the minimal upper gradient by $\grad{\dfn}$.

A function $\dfn\in L^p(X, Z)$ with an upper gradient in $L^p(X)$ is said to be in the Newton-Sobolev class $\newtonsob{1}{p}(X,Z)$, and we define $\newtonsobloc{1}{p}(X,Z)$ similarly.

\begin{remark}
\label{paramirrelevant}
Note that inequality \eqref{ugdefeq} is invariant under a change in parameter, and so it need only be verified on every curve that is parametrized by arc-length.
\end{remark}

\section{Analysis of upper gradients}
\label{uganalysis}

Our main result in this section, \cref{modgradthm}, characterizes the $L^p$-norms of minimal weak upper gradients in terms of the modulus of certain curve families.  Most of the other results here are found in surveys such as \cite{Hajlaszsurvey} and \cite{Heinonensurvey}, or have counterparts proved in \cite[Section 2]{Cheeger}, though \cref{r1ug,gradsandvariation} seem to be new, and are important for the proof of \cref{quasiconformal}.  

First, we note that being a weak gradient is a local condition.
\begin{lemma}
\label{localcondition}
A Borel function $g$ is a $p$-weak upper gradient for $\dfn$ if and only if for every $x\in X$, there is an open neighborhood $U$ of $x$ such that $\restrict[g]{U}$ is a $p$-weak upper gradient for $\restrict[\dfn]{U}$.
\end{lemma}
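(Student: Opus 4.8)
The plan is to prove the two implications separately. The forward direction is essentially formal: fix an open set $U\subseteq X$, and note that every rectifiable curve $\gamma$ contained in $U$ is also a rectifiable curve in $X$, along which $\int_\gamma\restrict[g]{U}\,ds=\int_\gamma g\,ds$ and $|\restrict[\dfn]{U}(\gamma(b))-\restrict[\dfn]{U}(\gamma(a))|=|\dfn(\gamma(b))-\dfn(\gamma(a))|$. Hence the family of rectifiable curves in $U$ along which \eqref{ugdefeq} fails for the pair $(\restrict[\dfn]{U},\restrict[g]{U})$ is contained in the exceptional family for $(\dfn,g)$ in $X$. Since restricting a Borel admissible function to $U$ only decreases its $L^p$-norm, the $p$-modulus (computed in $U$) of a family of curves lying in $U$ never exceeds its $p$-modulus in $X$; so the former exceptional family has $p$-modulus zero in $U$, which is precisely the statement that $\restrict[g]{U}$ is a $p$-weak upper gradient for $\restrict[\dfn]{U}$.

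For the converse I would first invoke separability of $X$ to extract a countable subcover $\{U_i\}_{i\in\BBN}$ of the given cover $\{U_x\}_{x\in X}$. For each $i$, let $\Gamma_i$ be the family of rectifiable curves $\gamma$ contained in $U_i$ with $\int_\gamma g\,ds<|\dfn(\gamma(b))-\dfn(\gamma(a))|$; the hypothesis says exactly that $\mod(\Gamma_i)=0$ when this modulus is computed in $U_i$, and since every member of $\Gamma_i$ lies in $U_i$, extending an admissible function by $0$ on $X\setminus U_i$ shows $\mod(\Gamma_i)=0$ in $X$ as well. Put $\Gamma'=\bigcup_i\Gamma_i$, so that $\mod(\Gamma')=0$ by countable subadditivity, and let $\Gamma^*$ be the family of rectifiable curves in $X$ that have \emph{some} subcurve in $\Gamma'$. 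Then $\Gamma^*$ is minorized by $\Gamma'$, so \cref{properties}\eqref{minorize} gives $\mod(\Gamma^*)\leq\mod(\Gamma')=0$.

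It then remains to verify \eqref{ugdefeq} for an arbitrary rectifiable $\gamma\colon[a,b]\rightarrow X$ with $\gamma\notin\Gamma^*$. The open cover $\{\gamma^{-1}(U_i)\}$ of the compact interval $[a,b]$ has a Lebesgue number, so there is a partition $a=t_0<t_1<\dotsb<t_n=b$ for which each piece $\gamma_j=\restrict[\gamma]{[t_{j-1},t_j]}$ maps into some $U_{i_j}$. Since $\gamma$ has no subcurve in $\Gamma'\supseteq\Gamma_{i_j}$, we get $\gamma_j\notin\Gamma_{i_j}$ and hence $\int_{\gamma_j}g\,ds\geq|\dfn(\gamma(t_j))-\dfn(\gamma(t_{j-1}))|$; summing over $j$ and using additivity of the path integral under concatenation together with the triangle inequality yields $\int_\gamma g\,ds\geq|\dfn(\gamma(b))-\dfn(\gamma(a))|$, as desired.

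The step I expect to require the most care is the bookkeeping of exceptional families in the second paragraph: one must discard not merely the curves that themselves locally violate the inequality, but every curve possessing even a single offending subcurve, and the clean way to see that this larger family still has $p$-modulus zero is via the minorization property \cref{properties}\eqref{minorize}. Everything else — the Lindel\"of reduction, the comparison of moduli relative to $U$ and to $X$ via restriction and extension of admissible functions, the Lebesgue-number subdivision, and the concatenation estimate — is routine.
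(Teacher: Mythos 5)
Your proof is correct and follows essentially the same route as the paper's: reduce to a countable family of neighborhoods, observe that the union of the local exceptional families has $p$-modulus zero, and dispose of the curves possessing a bad subcurve via the minorization property \cref{properties}\eqref{minorize}, using compactness of $[a,b]$ to localize. The only organizational difference is that you argue the final step in the contrapositive (Lebesgue number, subdivision, and concatenation show that a curve with no locally bad subcurve satisfies \eqref{ugdefeq}), whereas the paper shows directly that a globally bad curve must contain arbitrarily small bad subcurves and hence a bad subcurve inside some basis element; the two are equivalent, and your version also handles the forward implication and the comparison of moduli relative to $U$ and $X$, which the paper leaves as trivial.
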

\begin{proof}
The first implication is trivial.  To prove the second, let $\{U_i\}$ be a countable basis for $X$ consisting of open neighborhoods on which $\restrict[g]{U_i}$ is a weak upper gradient for $\restrict[\dfn]{U_i}$.  Let $\Delta$ be the family of curves $\gamma \in\rectcurves(X)$ for which inequality \eqref{ugdefeq} fails, and for each $i$, define $\Delta_i\subset\Delta$ similarly, replacing $X$ with $U_i$.  

Suppose $\gamma\in\Delta$.  Then there are arbitrarily small subcurves of $\gamma$ in $\Delta$ as well, so by the compactness of $[a,b]$,  $\gamma$ has a subcurve in $\Delta_i$ for some $i$.  Thus $\Delta$ is minorized by $\bigcup_{i=1}^{\infty}\Delta_n$, whence by countable subadditivity and \cref{properties}, $\mod(\Delta)=0$.
\end{proof}

The usual definition of upper gradients is equivalent, via the following lemma, to an a priori stronger condition. The proof is a standard application of part \eqref{minorize} of \cref{properties} 
(see, e.g., the proof of \cite[Proposition 3.1]{Shanmugalingam}), and is thus omitted.
\begin{lemma}
\label{stronger}
A Borel function $g\colon X\rightarrow \BBR$ is a $p$-weak upper gradient for $\dfn$ if and only if for almost every every curve $\gamma \in\allcurves(X)$, inequality \eqref{ugdefeq} holds on every subcurve of $\gamma$.
\end{lemma}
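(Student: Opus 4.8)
The plan is to rerun, in a simplified form, the minorization argument from the proof of \cref{localcondition}; here not even the countable subadditivity of $\mod$ will be needed. The ``if'' direction is immediate: among the subcurves of a curve $\gamma$ parametrized by $[a,b]$ is $\gamma$ itself (take $[c,d]=[a,b]$), so if \eqref{ugdefeq} holds on every subcurve of $p$-almost every curve, then in particular it holds on $p$-almost every curve, which is the definition of a $p$-weak upper gradient for $\dfn$.

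For the ``only if'' direction, assume $g$ is a $p$-weak upper gradient for $\dfn$ and let
\[
\Gamma_0 = \{\, \gamma\in\rectcurves(X) : \int_\gamma g\,ds < |\dfn(\gamma(b))-\dfn(\gamma(a))| \,\}
\]
be the family of rectifiable curves on which \eqref{ugdefeq} fails; by hypothesis, together with the convention defining ``$p$-almost every curve'', $\mod(\Gamma_0)=0$. Let $\Gamma\subseteq\allcurves(X)$ be the family of all curves having some subcurve that lies in $\Gamma_0$. By construction every curve in $\Gamma$ has a subcurve in $\Gamma_0$, i.e.\ $\Gamma$ is minorized by $\Gamma_0$, so part \eqref{minorize} of \cref{properties} gives $\mod(\Gamma)\le\mod(\Gamma_0)=0$. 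If $\gamma\notin\Gamma$, then no subcurve of $\gamma$ belongs to $\Gamma_0$, which is precisely the assertion that \eqref{ugdefeq} holds on every rectifiable subcurve of $\gamma$; since subcurves of a rectifiable curve are rectifiable, and nonrectifiable curves form a family of zero modulus, this yields the statement on \emph{all} subcurves of $p$-almost every $\gamma\in\allcurves(X)$.

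I do not expect a genuine obstacle, since the argument is entirely formal; the template is exactly that of \cref{localcondition}, only without the covering and the appeal to countable additivity. The two points that deserve a moment's care are: that $\Gamma_0$ is a legitimate curve family, i.e.\ that whether \eqref{ugdefeq} holds depends only on the curve and not on its parametrization (\cref{paramirrelevant}), so that membership in $\Gamma_0$ is well defined; and that ``possessing a subcurve in $\Gamma_0$'' does indeed produce a family minorized by $\Gamma_0$, which is what allows \cref{properties}\eqref{minorize} to be invoked directly. If one wishes to avoid any worry about signs in the length integrals, one may first replace $g$ by $\max(g,0)$, which is still a $p$-weak upper gradient for $\dfn$, but this is purely cosmetic.
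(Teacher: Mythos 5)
Your proof is correct and is exactly the ``standard application of part \eqref{minorize} of \cref{properties}'' that the paper invokes while omitting the details: the exceptional family for the stronger statement is minorized by the zero-modulus family of rectifiable curves violating \eqref{ugdefeq}, plus the (automatically zero-modulus) family of nonrectifiable curves. Nothing further is needed.
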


We next characterize inequality \eqref{ugdefeq}  in terms of  $v_{\gamma}'$ and $v_{\dfn(\gamma)}'$.
\begin{lemma}
\label{r1ug}
Let $\gamma\in \abscurves(X)$, such that $\dfn(\gamma)\in \abscurves(Z)$.  If $g\colon X\rightarrow \BBR$ is a Borel function such that $\int_\gamma g\,ds<\infty$, then $g$ satisfies inequality \eqref{ugdefeq} for every subcurve of $\gamma$ if and only if the inequality
\begin{equation}
\label{r1ugdefeq}
g(\gamma(t))v_{\gamma}'(t) \geq v_{\dfn(\gamma)}'(t)
\end{equation}
holds for almost every $t\in [a,b]$.
\end{lemma}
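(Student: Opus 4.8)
The plan is to prove both implications by working with the arc-length parametrization $\arc{\gamma}$ and the fundamental theorem of calculus for absolutely continuous real-valued functions. Since inequality \eqref{ugdefeq} is parametrization-invariant (\cref{paramirrelevant}) and both $v_\gamma$ and $v_{\dfn(\gamma)}$ are compatible with passing to arc-length (by the discussion following \cref{curverestrict}, $v_\gamma'$ agrees with the metric derivative), it suffices to treat the case $\gamma=\arc{\gamma}$; equivalently, one may argue directly with $v_\gamma'$ and $v_{\dfn(\gamma)}'$ via \eqref{nonparam}. Set $\phi(t)=v_{\dfn(\gamma)}(t)$, which is absolutely continuous on $[a,b]$ by hypothesis, so $\phi(d)-\phi(c)=\int_c^d v_{\dfn(\gamma)}'(t)\,dt$ for every $[c,d]\subseteq[a,b]$. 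Also, by \eqref{nonparam}, $\int_{\restrict[\gamma]{[c,d]}} g\,ds = \int_c^d g(\gamma(t))v_\gamma'(t)\,dt$.

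For the implication \eqref{r1ugdefeq} $\Rightarrow$ \eqref{ugdefeq}: if \eqref{r1ugdefeq} holds a.e., then for any subcurve on $[c,d]$,
\[
\int_{\restrict[\gamma]{[c,d]}} g\,ds = \int_c^d g(\gamma(t))v_\gamma'(t)\,dt \geq \int_c^d v_{\dfn(\gamma)}'(t)\,dt = \phi(d)-\phi(c) = l(\dfn\restrict{[c,d]}) \geq |\dfn(\gamma(d))-\dfn(\gamma(c))|,
\]
which is exactly \eqref{ugdefeq} for that subcurve. This direction is the easy one and requires only the absolute continuity of $v_{\dfn(\gamma)}$ and monotonicity of the integral.

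For the converse \eqref{ugdefeq} $\Rightarrow$ \eqref{r1ugdefeq}: suppose \eqref{ugdefeq} holds on every subcurve. First upgrade it: applying \eqref{ugdefeq} to the subcurve on $[c,d]$ and to a partition $c=t_0\le t_1\le\dots\le t_n=d$ and summing, then taking the supremum over partitions, gives $\int_c^d g(\gamma(t))v_\gamma'(t)\,dt \geq v_{\dfn(\gamma)}(d)-v_{\dfn(\gamma)}(c) = \int_c^d v_{\dfn(\gamma)}'(t)\,dt$ for every interval $[c,d]$ — i.e. the absolutely continuous functions $t\mapsto \int_a^t g\,v_\gamma'$ and $t\mapsto \int_a^t v_{\dfn(\gamma)}'$ satisfy that the former increments dominate the latter on every subinterval. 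Differentiating (Lebesgue), $g(\gamma(t))v_\gamma'(t)\ge v_{\dfn(\gamma)}'(t)$ for a.e.\ $t$, which is \eqref{r1ugdefeq}. The main subtlety — and the step I expect to need the most care — is justifying the ``upgrade'' from the two-point inequality \eqref{ugdefeq} to control of the full variation increment $v_{\dfn(\gamma)}(d)-v_{\dfn(\gamma)}(c)$: one must check that summing \eqref{ugdefeq} over a partition and passing to the supremum indeed recovers $v_{\dfn(\gamma)}$ on $[c,d]$ (it does, by the definition of the variation function, since the partition sums increase to $v_{\dfn(\gamma)}(d)-v_{\dfn(\gamma)}(c)$ when $\gamma$ is already arc-length parametrized, so that $v_{\dfn(\gamma)}$ restricted to $[c,d]$ is the variation of $\dfn\circ\gamma$ on $[c,d]$), together with ensuring the partition-sum of integrals over adjacent subcurves is the integral over $[c,d]$ (additivity of $\int g\,v_\gamma'$, using $\int_\gamma g\,ds<\infty$). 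Once the integral inequality holds on all subintervals, the differentiation step is routine.
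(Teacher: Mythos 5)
Your proposal is correct and follows essentially the same route as the paper: the paper's proof also observes that \eqref{ugdefeq} on all subcurves is equivalent to $\int_{\restrict[\gamma]{[q,r]}} g\,ds \geq v_{\dfn(\gamma)}(r)-v_{\dfn(\gamma)}(q)$ for all $[q,r]$ (your partition-sum ``upgrade'' in one direction, the bound $|\dfn(\gamma(r))-\dfn(\gamma(q))|\leq v_{\dfn(\gamma)}(r)-v_{\dfn(\gamma)}(q)$ in the other), and then applies \eqref{nonparam} and the fundamental theorem of calculus exactly as you do. The only difference is expository: the paper compresses both directions into a single chain of equivalences, whereas you spell out the partition argument and the Lebesgue differentiation step explicitly.
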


\begin{proof}
Note that the upper gradient condition \eqref{ugdefeq} holds for a subcurve $\tilde{\gamma}=\restrict[\gamma]{[q,r]}$ of $\gamma$ if and only if $\int_{\tilde{\gamma}} g\,ds \geq v_{\dfn(\gamma)}(r) - v_{\dfn(\gamma)}(q)$.  Invoking equation \eqref{nonparam} and the fundamental theorem of calculus, this inequality becomes
\[ \int_q^r g(\gamma(t)) v_{\gamma}'(t)\, dt\geq \int_q^r v_{\dfn(\gamma)}'(t)\,dt\text{,} \]
which holds for every subinterval $[q,r]\subset[a,b]$ if and only if inequality \eqref{r1ugdefeq} is satisfied almost everywhere on [a,b].
\end{proof}

The following result was proved in \cite{Shanmugalingam}.
\begin{proposition}[{\cite[Proposition 3.1]{Shanmugalingam}}]
\label{ACC}
If $\dfn\in \newtonsobloc{1}{p}(X,Z)$, then $\dfn$ is absolutely continuous along almost every curve in $X$.
\end{proposition}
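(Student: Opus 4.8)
The plan is to exhibit a single curve family $\Gamma_0$ with $\mod(\Gamma_0)=0$ such that $\dfn$ is absolutely continuous on every rectifiable curve avoiding $\Gamma_0$, by combining the two exceptional families already at our disposal. Since $\dfn\in\newtonsobloc{1}{p}(X,Z)$, it has an upper gradient $g\in\lloc{p}(X)$. By part~\eqref{intaecurve} of \cref{properties}, $g$ is integrable along $p$-almost every curve in $\allcurves(X)$, and by \cref{stronger}, for $p$-almost every $\gamma\in\allcurves(X)$ the upper gradient inequality \eqref{ugdefeq} holds on every subcurve of $\gamma$. Let $\Gamma_0\subseteq\allcurves(X)$ be the union of these two exceptional families; then $\mod(\Gamma_0)=0$, since $\mod$ is an outer measure (\cref{properties}), and it suffices to prove that $\dfn$ is absolutely continuous on every $\gamma\in\rectcurves(X)\setminus\Gamma_0$.

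First I would fix such a $\gamma$ and pass to its arc-length parametrization. Because absolute continuity on $\gamma$ is a statement about $\arc{\gamma}$ alone, and \eqref{ugdefeq} is parametrization-invariant (\cref{paramirrelevant}), there is no loss in assuming $\gamma=\arc{\gamma}\in\abscurves(X)$, so that $v_\gamma(t)=t$, $v_\gamma'\equiv1$, and $g\circ\gamma\in L^1([0,l(\gamma)])$. Applying \eqref{ugdefeq} to the subcurve $\restrict[\gamma]{[s,t]}$ and using \eqref{nonparam} then gives
\[ |\dfn(\gamma(t))-\dfn(\gamma(s))|\leq\int_s^t g(\gamma(u))\,du \quad\text{whenever }0\leq s\leq t\leq l(\gamma)\text{.} \]
In particular the right-hand side tends to $0$ as $t-s\to0$, so $\dfn\circ\gamma$ is continuous and is thus a curve in $Z$.

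Finally I would upgrade this estimate to absolute continuity of the length function $v_{\dfn(\gamma)}$. Given pairwise disjoint subintervals $[s_1,t_1],\dots,[s_k,t_k]$ of $[0,l(\gamma)]$, choosing an arbitrarily fine partition of each and summing the displayed inequality over all the partition intervals yields
\[ \sum_{i=1}^k \bigl(v_{\dfn(\gamma)}(t_i)-v_{\dfn(\gamma)}(s_i)\bigr)\leq \int_{\bigcup_i [s_i,t_i]} g(\gamma(u))\,du\text{.} \]
Since $g\circ\gamma\in L^1$, absolute continuity of the Lebesgue integral makes the right-hand side as small as we like once $\sum_i(t_i-s_i)$ is small enough, so $v_{\dfn(\gamma)}$ is absolutely continuous. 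That is, $\dfn(\arc{\gamma})\in\abscurves(Z)$, which is precisely the assertion that $\dfn$ is absolutely continuous on $\gamma$; letting $\gamma$ range over $\rectcurves(X)\setminus\Gamma_0$ proves the proposition.

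The argument is largely routine given \cref{properties,stronger}; the one point requiring a little care is the last step, where one must pass from the pointwise modulus-of-continuity bound on $\dfn\circ\gamma$ to absolute continuity of the variation $v_{\dfn(\gamma)}$ --- i.e., one must use the correct length-based notion of absolute continuity for $Z$-valued curves rather than settle for mere continuity of $\dfn\circ\gamma$. The remaining bookkeeping (reducing to arc-length parametrizations and combining the two modulus-zero families via countable subadditivity of $\mod$) is standard.
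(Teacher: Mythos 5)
Your argument is correct and is essentially the standard proof of this fact --- the paper itself does not reprove it but simply cites \cite[Proposition 3.1]{Shanmugalingam}, whose argument is exactly the one you give: bound $|\dfn(\gamma(t))-\dfn(\gamma(s))|$ by $\int_s^t g(\arc{\gamma}(u))\,du$ on the complement of the two modulus-zero families, then invoke absolute continuity of the integral of $g\circ\arc{\gamma}\in L^1$. Your extra care in passing from the pointwise bound to absolute continuity of the variation function $v_{\dfn(\gamma)}$ (rather than just of $\dfn\circ\arc{\gamma}$) is exactly the right adaptation for metric-space targets.
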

\begin{remark}
Although \cite[Proposition 3.1]{Shanmugalingam} assumes $Z=\BBR$, the proof carries over word for word to the general case.  Moreover, though the result there assumes a weak gradient in $L^p(X)$, the localization follows immediately from the countable subadditivity of $\mod$. 
\end{remark}

Combining \cref{ACC,stronger,r1ug,properties,paramirrelevant} yields the following characterization of weak upper gradients.

\begin{proposition}
\label{gradsandvariation}
Let $g\in L^p(X)$.  Then $g$ is a $p$-weak upper gradient for $\dfn$ if and only if for almost every curve $\gamma\in \abscurves(X)$, $\dfn$ is absolutely continuous along $\gamma$, and inequality \eqref{r1ugdefeq} holds almost everywhere.
\end{proposition}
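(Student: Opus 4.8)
The plan is to prove both implications by reducing, one curve at a time, to \cref{r1ug}: its hypotheses are precisely the three pointwise conditions that must be tracked along almost every curve, namely finiteness of $\int_\gamma g\,ds$, absolute continuity of $\dfn$ along $\gamma$, and validity of \eqref{ugdefeq} on every subcurve of $\gamma$. For the forward implication, assume $g$ is a $p$-weak upper gradient for $\dfn$. Since $g\in L^p(X)\subset\lloc{p}(X)$, part \eqref{intaecurve} of \cref{properties} gives $\int_\gamma g\,ds<\infty$ for $p$-almost every curve $\gamma$; since $\dfn$ then admits a weak upper gradient in $\lloc{p}(X)$, the argument of \cref{ACC} shows $\dfn$ is absolutely continuous along $p$-almost every curve; and \cref{stronger} gives that \eqref{ugdefeq} holds on every subcurve of $p$-almost every curve. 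Discarding the union of these three exceptional families, which has modulus zero by countable subadditivity of $\mod$, we are left with a family containing $p$-almost every $\gamma\in\abscurves(X)$ along which all three conditions hold; for such $\gamma$ we also have $\dfn(\gamma)\in\abscurves(Z)$, since $\gamma\in\abscurves(X)$. Thus \cref{r1ug} applies to each such $\gamma$ and yields \eqref{r1ugdefeq} almost everywhere on $[a,b]$.

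For the converse, assume that for $p$-almost every $\gamma\in\abscurves(X)$, $\dfn$ is absolutely continuous along $\gamma$ and \eqref{r1ugdefeq} holds almost everywhere. By \cref{paramirrelevant} it suffices to verify \eqref{ugdefeq} for $p$-almost every rectifiable curve in its arc-length parametrization $\sigma=\arc{\gamma}$. Every such $\sigma$ belongs to $\abscurves(X)$, and since the $p$-modulus is unchanged under reparametrization — both admissibility and the line integral $\int_\gamma\rho\,ds$ depend only on $\arc{\gamma}$ — the exceptional family in the hypothesis pulls back to a modulus-zero family of rectifiable curves. Hence, for $p$-almost every rectifiable $\gamma$, the curve $\sigma=\arc{\gamma}$ has $\dfn$ absolutely continuous along it and satisfies \eqref{r1ugdefeq} almost everywhere; intersecting with the full-modulus family along which $\int_\sigma g\,ds<\infty$ (again part \eqref{intaecurve} of \cref{properties}), $p$-almost every $\sigma$ satisfies the hypotheses of \cref{r1ug}. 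That lemma then gives \eqref{ugdefeq} on every subcurve of $\sigma$, in particular on $\sigma$ itself, so $g$ is a $p$-weak upper gradient for $\dfn$.

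The argument is essentially an assembly of \cref{ACC,stronger,r1ug,properties,paramirrelevant}, so the main point requiring care is the modulus bookkeeping in the converse: translating a modulus-zero exceptional family of absolutely continuous curves into a modulus-zero exceptional family of rectifiable curves. This is harmless because the $p$-modulus of a curve family is insensitive to reparametrization; one should also note that \cref{r1ug} is applied with $\gamma$ replaced by the arc-length parametrized curve $\sigma$, for which $\arc{\sigma}=\sigma$, so that ``absolutely continuous along $\sigma$'' is literally the statement $\dfn(\sigma)\in\abscurves(Z)$.
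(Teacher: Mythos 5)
Your proof is correct and is precisely the argument the paper intends: the paper dispatches this proposition in one line as ``combining \cref{ACC,stronger,r1ug,properties,paramirrelevant}'', and your two directions carry out exactly that combination, with the reparametrization/modulus bookkeeping in the converse handled correctly (an admissible function for the exceptional family of arc-length curves is admissible for their reparametrizations, since $\int_\gamma\rho\,ds$ depends only on $\arc{\gamma}$). No gaps.
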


It follows almost immediately from \cref{gradsandvariation} that weak upper gradients form a lattice, and behave well under restrictions. These properties are summarized in the next result (compare \cite[Lemma 7.17]{Hajlaszsurvey}).  
\begin{lemma}
\label{lattice}
Let $\dfn_1,\dfn_2\colon X\rightarrow Z$, for $p\geq 1$, let $A \subseteq X$ with $\restrict[\dfn_1]{A}=\restrict[\dfn_2]{A}$, and let $g_1,g_2\in \lloc{p}(X)$ be $p$-weak upper gradients for $\dfn_1$ and $\dfn_2$, respectively.  Then $g = \charfcn{X\backslash A} g_1 + \charfcn{A}\min(g_1,g_2)$ is a $p$-weak upper gradient for $\dfn_1$.  In particular, if $g_1$ and $g_2$ are $p$-weak upper gradients for $\dfn$, then so is $\min(g_1,g_2)$.
\end{lemma}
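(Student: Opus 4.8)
The plan is to read both assertions off the pointwise characterization of $p$-weak upper gradients in \cref{gradsandvariation}, using \cref{curverestrict} to compare the variation derivatives of $\dfn_1(\gamma)$ and $\dfn_2(\gamma)$ on the part of a curve where the two maps coincide. Note first that $g$ is dominated by $|g_1|+|g_2|\in\lloc p(X)$, so $g\in\lloc p(X)$; since being a weak upper gradient is a local property (\cref{localcondition}), it is enough to treat the case $g_1,g_2\in L^p(X)$, where \cref{gradsandvariation} applies directly. Feeding the pairs $(g_1,\dfn_1)$ and $(g_2,\dfn_2)$ into the ``only if'' direction of \cref{gradsandvariation} and taking the union of the two (null) exceptional families, I obtain a curve family of modulus zero outside of which every $\gamma\in\abscurves(X)$, parametrized by $[a,b]$, satisfies $\dfn_1(\gamma),\dfn_2(\gamma)\in\abscurves(Z)$ and $g_i(\gamma(t))\,v_\gamma'(t)\ge v_{\dfn_i(\gamma)}'(t)$ for almost every $t\in[a,b]$, $i=1,2$.

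Fix such a curve $\gamma$ and set $S=\gamma^{-1}(A)$, a measurable subset of $[a,b]$. On $S$ we have $\dfn_1(\gamma(t))=\dfn_2(\gamma(t))$ because $\restrict[\dfn_1]{A}=\restrict[\dfn_2]{A}$, so \cref{curverestrict} applied to the curves $\dfn_1(\gamma),\dfn_2(\gamma)\in\abscurves(Z)$ gives $v_{\dfn_1(\gamma)}'(t)=v_{\dfn_2(\gamma)}'(t)$ for almost every $t\in S$. Now split $[a,b]$ along $S$. For almost every $t\notin S$ one has $\gamma(t)\notin A$, hence $g(\gamma(t))=g_1(\gamma(t))$ and $g(\gamma(t))\,v_\gamma'(t)\ge v_{\dfn_1(\gamma)}'(t)$. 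For almost every $t\in S$ one has $g(\gamma(t))=\min(g_1(\gamma(t)),g_2(\gamma(t)))$; since $v_\gamma'(t)\ge0$ and both $g_1(\gamma(t))\,v_\gamma'(t)\ge v_{\dfn_1(\gamma)}'(t)$ and $g_2(\gamma(t))\,v_\gamma'(t)\ge v_{\dfn_2(\gamma)}'(t)=v_{\dfn_1(\gamma)}'(t)$, the minimum of the two products is still at least $v_{\dfn_1(\gamma)}'(t)$, so $g(\gamma(t))\,v_\gamma'(t)\ge v_{\dfn_1(\gamma)}'(t)$. Thus \eqref{r1ugdefeq} holds almost everywhere on $[a,b]$ for $\dfn_1$ and this $g$ along almost every curve, and the ``if'' direction of \cref{gradsandvariation} shows that $g$ is a $p$-weak upper gradient for $\dfn_1$. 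The final sentence is the special case $A=X$ and $\dfn_1=\dfn_2=\dfn$, in which $g=\min(g_1,g_2)$.

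The argument is short, and the only step that demands care is the invocation of \cref{curverestrict}, which requires the two curves to share a common parameter interval. This is why it is convenient to carry the whole computation out with $\gamma$ itself rather than with its arc-length reparametrization: then $\dfn_1\circ\gamma$ and $\dfn_2\circ\gamma$ are manifestly parametrized by the same interval $[a,b]$, and the hypothesis of \cref{curverestrict} holds verbatim for the measurable set $S\subseteq[a,b]$. The remaining points — the finite combination of null curve families, the $\lloc p$-bound on $g$, and the localization via \cref{localcondition} — are routine bookkeeping.
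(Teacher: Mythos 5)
Your proof is correct and follows essentially the same route as the paper's: apply \cref{gradsandvariation} to both pairs, use \cref{curverestrict} to identify $v_{\dfn_1(\gamma)}'$ and $v_{\dfn_2(\gamma)}'$ on $\gamma^{-1}(A)$, and verify inequality \eqref{r1ugdefeq} separately on $\gamma^{-1}(A)$ and its complement. The only addition is your explicit reduction from $\lloc{p}$ to $L^p$ via \cref{localcondition}, a detail the paper leaves implicit.
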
 
\begin{proof}
By \cref{gradsandvariation}, for almost every curve $\gamma$, we have the inequality
\[ g_1(\gamma(t)) v_{\gamma}'(t)\geq  v_{\dfn_1(\gamma)}'(t)\]
for almost every $t\in \gamma^{-1}(X\backslash A)$, and, invoking \cref{curverestrict} as well,
\[ g_2(\gamma(t)) v_{\gamma}'(t)\geq v_{\dfn_2(\gamma)}'(t) = v_{\dfn_1(\gamma)}'(t)\]
for almost every $t\in \gamma^{-1}(A)$.   The lemma now follows, again from \cref{gradsandvariation}.
\end{proof}

The following two locality properties of $\grad{\dfn}$ are immediately deduced from \cref{lattice,localcondition}. Compare \cite[Corollary 2.25]{Cheeger}.
\begin{corollary}
\label{mingradrestrict}
If $\dfn_1,\dfn_2\in\newtonsobloc{1}{p}(X,Z)$, and
$\restrict[\dfn_1]{A}=\restrict[\dfn_2]{A}$ for some Borel set $A\subset X$, 
then $\restrict[\grad{\dfn_1}]{A}=\restrict[\grad{\dfn_2}]{A}$.
\end{corollary}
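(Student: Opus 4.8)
The plan is to derive this directly from \cref{lattice} and the minimality of the minimal weak upper gradients, applied symmetrically to $\dfn_1$ and $\dfn_2$. First I would record that the statement makes sense: since $\dfn_i\in\newtonsobloc{1}{p}(X,Z)$, each $\dfn_i$ has a $p$-weak upper gradient in $\lloc{p}(X)$, so its minimal $p$-weak upper gradient $\grad{\dfn_i}$ is a well-defined element of $\lloc{p}(X)$; by \cref{localcondition} one may, if convenient, work in a fixed neighborhood, but the argument below is already global.

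Next I would apply \cref{lattice} with $g_1=\grad{\dfn_1}$ and $g_2=\grad{\dfn_2}$: the function $g=\charfcn{X\backslash A}\,\grad{\dfn_1}+\charfcn{A}\min(\grad{\dfn_1},\grad{\dfn_2})$ is a $p$-weak upper gradient for $\dfn_1$. Since $A$ is Borel, $g$ is Borel, and since $g\leq\grad{\dfn_1}$ pointwise, $g\in\lloc{p}(X)$. By minimality of $\grad{\dfn_1}$ we also have $g\geq\grad{\dfn_1}$ $\meas$-almost everywhere, hence $g=\grad{\dfn_1}$ $\meas$-a.e.; restricting this equality to $A$ gives $\min(\grad{\dfn_1},\grad{\dfn_2})=\grad{\dfn_1}$ $\meas$-a.e.\ on $A$, i.e.\ $\grad{\dfn_1}\leq\grad{\dfn_2}$ $\meas$-a.e.\ on $A$. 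Interchanging the roles of $\dfn_1$ and $\dfn_2$ yields $\grad{\dfn_2}\leq\grad{\dfn_1}$ $\meas$-a.e.\ on $A$, and therefore $\restrict[\grad{\dfn_1}]{A}=\restrict[\grad{\dfn_2}]{A}$.

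I do not expect any genuine obstacle here: all the substance is already packaged into \cref{lattice}, and the only points needing (routine) attention are that $\grad{\dfn_1}$ and $\grad{\dfn_2}$ satisfy the hypotheses of that lemma (Borel functions in $\lloc{p}(X)$ that are $p$-weak upper gradients), and that the notion of minimal weak upper gradient is available for $\newtonsobloc{1}{p}$ maps, which is the role of \cref{localcondition}.
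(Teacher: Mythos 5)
Your argument is correct and is exactly what the paper intends: the corollary is stated as an immediate consequence of \cref{lattice} (applied to the minimal gradients, with minimality forcing $\min(\grad{\dfn_1},\grad{\dfn_2})=\grad{\dfn_1}$ a.e.\ on $A$, then symmetrizing), and the paper gives no further proof. The only cosmetic quibble is that \cref{localcondition} is invoked to localize, not to furnish the existence of minimal weak upper gradients (which the paper gets from \cite[Theorem 7.16]{Hajlaszsurvey}), but this does not affect the argument.
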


\begin{corollary}
\label{mingradlocal}
The map $\dfn$ is in the class $\newtonsobloc{1}{p}(X,Z)$ if and only if every $x\in X$ has an open neighborhood $U$ such that $\restrict[\dfn]{U} \in \newtonsobloc{1}{p}(U,Z)$. In this case, $\grad{(\restrict[\dfn]{U})}=\restrict[(\grad{\dfn})]{U}$ for every open subset $U\subseteq X$.
\end{corollary}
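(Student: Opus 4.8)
The plan is to reduce both assertions to the pointwise characterization of weak upper gradients in \cref{gradsandvariation} together with the locality principle \cref{localcondition}. The ``only if'' direction and the inequality $\grad{(\restrict[\dfn]{U})}\leq\restrict[(\grad{\dfn})]{U}$ are immediate: if $\dfn\in\newtonsobloc{1}{p}(X,Z)$ and $U\subseteq X$ is open, then $\restrict[\dfn]{U}\in\lloc{p}(U,Z)$ trivially, while the (trivial) forward implication of \cref{localcondition} applied to $\grad{\dfn}$ shows that $\restrict[(\grad{\dfn})]{U}$ is a $p$-weak upper gradient for $\restrict[\dfn]{U}$; hence $\restrict[\dfn]{U}\in\newtonsobloc{1}{p}(U,Z)$, and minimality of $\grad{(\restrict[\dfn]{U})}$ gives $\grad{(\restrict[\dfn]{U})}\leq\restrict[(\grad{\dfn})]{U}$ $\meas$-a.e.

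For the ``if'' direction, suppose each $x\in X$ has an open neighborhood $V$ with $\restrict[\dfn]{V}\in\newtonsobloc{1}{p}(V,Z)$. Restricting the weak upper gradient $\grad{(\restrict[\dfn]{V})}\in\lloc{p}(V)$ via \cref{localcondition} and shrinking, each $x$ has an open neighborhood $W$ with $\dfn\in L^p(W,Z)$ and a weak upper gradient $g_W\in L^p(W)$ for $\restrict[\dfn]{W}$. Since $X$ is separable, hence Lindel\"of, I extract a countable cover $\{W_i\}$ of this type with gradients $g_i\in L^p(W_i)$, set $A_i=W_i\setminus\bigcup_{j<i}W_j$, and define $g=\sum_i\charfcn{A_i}g_i$. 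On a small ball $B\subseteq W_j$ only the indices $i\leq j$ contribute to $g$, and $\int_{A_i\cap B}g_i^p\leq\int_{W_i}g_i^p<\infty$, so $g\in\lloc{p}(X)$; similarly $\dfn\in\lloc{p}(X,Z)$. To see that $g$ is a $p$-weak upper gradient for $\dfn$ it suffices, by \cref{localcondition}, to verify this on such a ball $O\subseteq W_j$, where for $\gamma\in\abscurves(O)$ the interval $[a,b]$ is partitioned by the sets $\gamma^{-1}(A_i)$, $i\leq j$. On $\gamma^{-1}(A_i)$ one has $g(\gamma(t))=g_i(\gamma(t))$; and since $\gamma^{-1}(W_i)$ is relatively open, applying \cref{gradsandvariation} to $g_i$ on $W_i$ along the closed subcurves of $\gamma$ lying in $W_i$ — after discarding, via part \eqref{minorize} of \cref{properties} and countable subadditivity, the modulus-zero families of curves that meet the relevant exceptional sets, and using \cref{curverestrict} to transfer the resulting pointwise inequality to $\gamma$ and \cref{ACC} to know that $\dfn$ is absolutely continuous along almost every such $\gamma$ — gives $g_i(\gamma(t))v_\gamma'(t)\geq v_{\dfn(\gamma)}'(t)$ for almost every $t\in\gamma^{-1}(W_i)\supseteq\gamma^{-1}(A_i)$. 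Assembling these inequalities over $i\leq j$ yields \eqref{r1ugdefeq} almost everywhere on $[a,b]$, so \cref{gradsandvariation} shows $g$ is a $p$-weak upper gradient for $\dfn$ on $X$; thus $\dfn\in\newtonsobloc{1}{p}(X,Z)$.

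Finally, to upgrade the easy-half inequality to $\grad{(\restrict[\dfn]{U})}=\restrict[(\grad{\dfn})]{U}$ $\meas$-a.e. for every open $U$, I would run the same local argument on the two-piece partition $X=U\sqcup(X\setminus U)$: the function $\tilde g:=\charfcn{X\setminus U}\grad{\dfn}+\charfcn{U}\grad{(\restrict[\dfn]{U})}$ satisfies $\tilde g\leq\grad{\dfn}$ by the easy half, hence $\tilde g\in\lloc{p}(X)$, and decomposing $\gamma^{-1}(U)$ into relatively open intervals and invoking \cref{gradsandvariation} for $\grad{\dfn}$ on $X$ off $U$ and for $\grad{(\restrict[\dfn]{U})}$ on $U$ inside — exactly as above — shows that $\tilde g$ is a $p$-weak upper gradient for $\dfn$; minimality of $\grad{\dfn}$ then forces $\tilde g\geq\grad{\dfn}$, i.e.\ $\grad{(\restrict[\dfn]{U})}\geq\restrict[(\grad{\dfn})]{U}$ on $U$. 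The step I expect to require the most care is exactly this passage across $\partial U$ (equivalently, across the overlaps $W_i\cap W_j$ in the second paragraph): a curve of $X$ need not lie in $U$ even where it spends positive time, so the pointwise inequality \eqref{r1ugdefeq} on the ``inside'' part cannot simply be read off from the intrinsic gradient on $U$, but must be imported through the closed subcurves lying in $U$ together with the interval decomposition of the open set $\gamma^{-1}(U)$ — which is where \cref{gradsandvariation}, \cref{curverestrict}, \cref{ACC} and part \eqref{minorize} of \cref{properties} do the real work (compare \cite[Corollary 2.25]{Cheeger}).
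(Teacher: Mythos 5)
Your argument is correct and follows the route the paper intends: \cref{localcondition} handles the membership claim, and the patching of gradients across a partition via \cref{gradsandvariation}, \cref{curverestrict}, \cref{ACC} and the minorization property is exactly the content of \cref{lattice}, the other ingredient the paper cites. The only difference is that you unpack that patching argument in the local setting (where one gradient lives only on an open subset and controls only the curves lying there), rather than invoking \cref{lattice} as stated for functions and gradients defined on all of $X$ --- a reasonable precaution, and the passage across $\partial U$ via closed subcurves of the open set $\gamma^{-1}(U)$ is handled correctly.
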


We now state the main result for this section. First, for every metric space $Z$, let $\bigcurves{\epsilon}(Z)$ be the collection of curves $\gamma\in\allcurves(Z)$ such that $|\gamma(b)-\gamma(a)|\geq \epsilon$.  Here and throughout, $\dfn^{-1}(\Gamma)$ denotes the family of curves $\gamma\in \allcurves(X)$ such that $\dfn(\gamma)\in \Gamma.$\footnote{Note that by this definition, a discontinuous map $\gamma\colon I\rightarrow X$ is never included in $\dfn^{-1}(\Gamma)$, even if $\dfn(\gamma)$ is a curve in $\Gamma$, which may occur if the map $\dfn$ is not a homeomorphism.}
\begin{theorem}
\label{modgradthm}
Let $\dfn\in L^p(X,Z)$, $p>1$.
Then $\dfn\in \newtonsob{1}{p}(X,Z)$ if and only if 
\begin{equation}
\label{finiteness}
\liminf_{\epsilon\rightarrow 0}\epsilon^p\mod(\dfn^{-1}(\bigcurves{\epsilon}(Z)))< \infty\text{.}
\end{equation}
Moreover, if this is the case, then the $\liminf$ on the left hand side is an actual limit, and
\begin{equation*}
\norm{\grad{\dfn}}^p = \lim_{\epsilon\rightarrow 0}\epsilon^p \mod(\dfn^{-1}(\bigcurves{\epsilon}(Z))) \text{.}
\end{equation*}
\end{theorem}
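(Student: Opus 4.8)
The plan is to establish both implications with the same constant, so that together they identify $\norm{\grad{\dfn}}^p$ exactly; write $\Gamma_\epsilon=\dfn^{-1}(\bigcurves{\epsilon}(Z))$ throughout. For the forward implication, suppose $\dfn\in\newtonsob{1}{p}(X,Z)$. Since $\grad{\dfn}$ is a $p$-weak upper gradient, there is a $\mod$-null family outside which every rectifiable $\gamma$ with $\dfn(\gamma)\in\bigcurves{\epsilon}(Z)$ satisfies $\int_\gamma\grad{\dfn}\,ds\geq|\dfn(\gamma(b))-\dfn(\gamma(a))|\geq\epsilon$; adding to $\grad{\dfn}/\epsilon$ an admissible function of arbitrarily small $L^p$ norm for that exceptional family yields admissible functions for $\Gamma_\epsilon$, so $\epsilon^p\mod(\Gamma_\epsilon)\leq\norm{\grad{\dfn}}^p$ for every $\epsilon>0$. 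In particular the $\liminf$ is finite and $\limsup_{\epsilon\to0}\epsilon^p\mod(\Gamma_\epsilon)\leq\norm{\grad{\dfn}}^p$.

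For the converse, put $L=\liminf_{\epsilon\to0}\epsilon^p\mod(\Gamma_\epsilon)<\infty$, choose $\epsilon_i\downarrow0$ realizing this $\liminf$, choose $\rho_i$ admissible for $\Gamma_{\epsilon_i}$ with $\epsilon_i^p\int_X\rho_i^p\,d\meas\to L$, and set $g_i=\epsilon_i\rho_i$, so that $\norm{g_i}^p\to L$. The technical core I would prove is a subdivision estimate: for $p$-almost every rectifiable $\gamma$,
\begin{equation*}
|\dfn(\gamma(b))-\dfn(\gamma(a))|\leq\int_\gamma g_i\,ds+\epsilon_i .
\end{equation*}
To prove it, restrict (using part \eqref{intaecurve} of \cref{properties}) to a $\gamma$, parametrized by arc length on $[0,\ell]$, along which $\rho_i$ is integrable, and set $R=\int_\gamma\rho_i\,ds$. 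The function $s\mapsto\int_{\gamma|_{[0,s]}}\rho_i\,ds$ is continuous and nondecreasing from $0$ to $R$, so $[0,\ell]$ can be partitioned into $N=\lfloor R\rfloor+1$ subintervals along each of which it grows by $R/N<1$. On each corresponding subcurve $\rho_i$ integrates to less than $1$, so that subcurve cannot lie in $\Gamma_{\epsilon_i}$, and hence its endpoints have $\dfn$-images closer than $\epsilon_i$; summing the $N$ triangle-inequality terms and using $N\leq R+1$ gives the estimate.

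Given this, I would exploit that $L^p(X)$ is reflexive (as $p>1$): pass to a subsequence with $g_{i_k}\rightharpoonup g$ weakly in $L^p(X)$, whence $\norm{g}^p\leq L$, and use Mazur's lemma on successive tails to obtain finite convex combinations $h_m=\sum_k\lambda_{m,k}g_{i_k}$, involving only indices $k\geq m$, with $h_m\to g$ in $L^p(X)$. Averaging the subdivision estimate over such a combination gives $|\dfn(\gamma(b))-\dfn(\gamma(a))|\leq\int_\gamma h_m\,ds+\epsilon_{i_m}$ for $p$-almost every $\gamma$. By part \eqref{fuconverge} of \cref{properties}, a subsequence of $\{h_m\}$ satisfies $\int_\gamma|h_m-g|\,ds\to0$ for $p$-almost every rectifiable $\gamma$; letting $m\to\infty$ shows $\int_\gamma g\,ds\geq|\dfn(\gamma(b))-\dfn(\gamma(a))|$ for $p$-almost every curve, i.e.\ $g\in L^p(X)$ is a $p$-weak upper gradient of $\dfn$. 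By \cite[Lemma 2.4]{KoskelaMacManus}, together with the standard device of adding a function that is $+\infty$ on the residual $\mod$-null family, $\dfn$ then has a genuine upper gradient in $L^p(X)$, so $\dfn\in\newtonsob{1}{p}(X,Z)$, and minimality of $\grad{\dfn}$ gives $\norm{\grad{\dfn}}^p\leq\norm{g}^p\leq L$. Combining the two halves, $\limsup_{\epsilon\to0}\epsilon^p\mod(\Gamma_\epsilon)\leq\norm{\grad{\dfn}}^p\leq L=\liminf_{\epsilon\to0}\epsilon^p\mod(\Gamma_\epsilon)$, so the $\liminf$ is a genuine limit equal to $\norm{\grad{\dfn}}^p$.

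I expect the main obstacle to be the subdivision estimate: converting the purely metric admissibility of $\rho_i$ over curves with prescribed $\dfn$-endpoint displacement into a pointwise bound on the oscillation of $\dfn$ along almost every curve, uniformly as $i\to\infty$. Continuity of the counting function $s\mapsto\int_{\gamma|_{[0,s]}}\rho_i\,ds$ is what makes the partition available, and some care is needed to ensure that a subcurve with small $\rho_i$-integral genuinely fails to lie in $\Gamma_{\epsilon_i}$. The remaining ingredients — reflexivity, Mazur's lemma, and the Fuglede-type convergence of \cref{properties} — then combine routinely.
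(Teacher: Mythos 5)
Your overall architecture coincides with the paper's: the upper bound $\epsilon^p\mod(\dfn^{-1}(\bigcurves{\epsilon}(Z)))\leq \int_X \grad{\dfn}^p\,d\meas$ comes from (almost-)admissibility of $\epsilon^{-1}\grad{\dfn}$, and the converse extracts near-extremal admissible functions along a minimizing sequence $\epsilon_i$, passes to a limit $g$ via reflexivity, Mazur's lemma, and Fuglede's theorem, and verifies the upper gradient inequality for $g$ through a curve-wise estimate of the form $\int_\gamma g_i\,ds\geq|\dfn(\gamma(b))-\dfn(\gamma(a))|-\epsilon_i$. Where you genuinely diverge is in how you obtain that estimate. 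The paper subdivides in the target: it locates the last parameter at which $|\dfn(\gamma(t))-\dfn(\gamma(a))|$ equals a multiple $m\epsilon_i$, and uses that $(m\epsilon_i)^{-1}g_i$ is almost admissible for $\dfn^{-1}(\bigcurves{m\epsilon_i}(Z))$ for every $m\in\BBN$. You subdivide in the domain, cutting $\gamma$ into $\lfloor R\rfloor+1$ pieces of equal $\rho_i$-mass less than $1$ and summing the triangle inequality. Your version is a clean dual of the paper's and has the minor advantage of invoking admissibility only for the single family $\Gamma_{\epsilon_i}=\dfn^{-1}(\bigcurves{\epsilon_i}(Z))$; both yield the identical inequality. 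Your use of genuinely admissible near-minimizers in place of the paper's minimal almost-admissible functions is an equivalent bookkeeping choice.

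There is, however, one genuine gap. In the subdivision step you argue that a subcurve $\gamma_j$ with $\int_{\gamma_j}\rho_i\,ds<1$ ``cannot lie in $\Gamma_{\epsilon_i}$, and hence its endpoints have $\dfn$-images closer than $\epsilon_i$.'' The first half is fine; the second half is the converse implication, and it fails in general: by the paper's convention, $\gamma_j\in\dfn^{-1}(\bigcurves{\epsilon_i}(Z))$ requires $\dfn\circ\gamma_j$ to be a \emph{curve}, i.e.\ continuous, so $\gamma_j$ may fail to lie in $\Gamma_{\epsilon_i}$ simply because $\dfn$ is discontinuous along it, even though its endpoint images are $\epsilon_i$ or more apart. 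Since $\dfn$ is only assumed Borel (the theorem is not restricted to homeomorphisms), this case must be excluded, and your closing remark that ``care is needed to ensure that a subcurve with small $\rho_i$-integral genuinely fails to lie in $\Gamma_{\epsilon_i}$'' aims at the unproblematic direction of the equivalence rather than the one you actually use. The repair is exactly \cref{curvecontinuity}: finiteness of $\mod(\dfn^{-1}(\bigcurves{\epsilon_i}(Z)))$ along your sequence already forces $\dfn$ to be continuous along $p$-almost every curve, so you should add continuity of $\dfn\circ\gamma$ to the properties of the curves on which you run the subdivision; the exceptional families (non-integrability of some $\rho_i$, failure of continuity, failure of the Fuglede convergence) are countably many and hence jointly negligible. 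With that one addition your argument closes: the averaged estimate for $h_m$ uses $\epsilon_{i_k}\leq\epsilon_{i_m}$ for $k\geq m$, weak lower semicontinuity gives $\int_X g^p\,d\meas\leq L$, and the passage from a weak to a genuine upper gradient via \cite[Lemma 2.4]{KoskelaMacManus} is the same as in the paper.
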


\begin{remark}
\cref{modgradthm} fails in the case $p=1$.  Indeed, let $\dfn \colon [0,1]\rightarrow [0,1]$ be a homeomorphism.  Then for each $n\in\BBN$, the function 
\begin{equation*}
\rho_n=\sum_{i=1}^{n} \frac{1}{\left|\dfn^{-1}\left(\frac{i}{n}\right)-\dfn^{-1}\left(\frac{i-1}{n}\right)\right|}\charfcn{\dfn^{-1}\left(\left[\frac{i-1}{n},\frac{i}{n}\right]\right)}
\end{equation*}
is admissible for $\dfn^{-1}(\bigcurves{\frac{2}{n}}([0,1]))$, and so
\begin{equation*}
\frac{2}{n}\mod(\dfn^{-1}(\bigcurves{\frac{2}{n}}([0,1])))\leq \frac{2}{n}\int_0^1 \rho_n(t)\,dt = 2,
\end{equation*}
and yet there are many homeomorphisms that are not in $\newtonsob{1}{1}([0,1])$.
\end{remark}

%

Before we prove \cref{modgradthm}, we address the issue of continuity (not absolute continuity) along almost every curve.  This is not necessary for our applications, where $\dfn=\map$ is a homeomorphism, but it may be of general interest that continuity need not be built into the hypotheses of \cref{modgradthm}.

Recall that $\dfn$ is said to be $\epsilon$-continuous if every point has a neighborhood $U$ such that $\diam(\dfn(U)) < \epsilon$.  We say that $\dfn$ is $\epsilon$-continuous along $\gamma$ if $\dfn(\gamma)$ is $\epsilon$-continuous.  
\begin{lemma}
\label{curvecontinuity}
Suppose $\mod(\dfn^{-1}(\bigcurves{\epsilon}(Z)))<\infty$.  Then $\dfn$ is $\epsilon$-continuous along $p$-almost every curve.  In particular, if $\mod(\dfn^{-1}(\bigcurves{\epsilon_n}(Z)))<\infty$ for some sequence $\{\epsilon_n\}$ converging to $0$, then $\dfn$ is continuous along $p$-almost every curve. 
\end{lemma}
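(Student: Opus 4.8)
The plan is to show that, under the stated hypothesis, the family $\Delta_\epsilon$ of curves along which $\dfn$ fails to be $\epsilon$-continuous has $p$-modulus zero; the ``in particular'' clause then follows by a countable decomposition.

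First I would reformulate the failure of $\epsilon$-continuity. If $\dfn$ is not $\epsilon$-continuous along a curve $\gamma\colon[a,b]\to X$, there is a parameter $t_0\in[a,b]$ with $\diam(\dfn(\gamma(U)))\geq\epsilon$ for every neighborhood $U$ of $t_0$ in $[a,b]$. Hence for arbitrarily small such $U$ there are parameters $s,u\in U$ with $|\dfn(\gamma(s))-\dfn(\gamma(u))|\geq\epsilon$, so that the subcurve $\restrict[\gamma]{[\min(s,u),\max(s,u)]}$ lies in $\dfn^{-1}(\bigcurves{\epsilon}(Z))$ and, since $U$ may be shrunk, has arbitrarily small length. (Here some care with the strict-versus-nonstrict inequalities in the definitions of $\epsilon$-continuity and of $\bigcurves{\epsilon}$ is required; alternatively one replaces $\epsilon$ by a slightly smaller value, which costs nothing in the application below.) In other words $\Delta_\epsilon$ is minorized by $\dfn^{-1}(\bigcurves{\epsilon}(Z))$, and in fact every $\gamma\in\Delta_\epsilon$ has subcurves in $\dfn^{-1}(\bigcurves{\epsilon}(Z))$ of length tending to $0$.

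The substantive point is that bare minorization gives only $\mod(\Delta_\epsilon)\leq\mod(\dfn^{-1}(\bigcurves{\epsilon}(Z)))<\infty$, whereas we need $\mod(\Delta_\epsilon)=0$, and this is where the subcurves being \emph{short} enters. Using the finiteness of $\mod(\dfn^{-1}(\bigcurves{\epsilon}(Z)))$, fix a Borel function $\rho$ admissible for $\dfn^{-1}(\bigcurves{\epsilon}(Z))$ with $\rho\in L^p(X)$. By part \eqref{intaecurve} of \cref{properties}, $\rho$ is integrable along $p$-almost every curve; let $\Gamma_0$ be the (modulus-zero) family where it is not. I claim every rectifiable $\gamma\in\Delta_\epsilon$ lies in $\Gamma_0$. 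Indeed, if such a $\gamma$ had $\rho$ integrable along it, then $r\mapsto\int_0^r\rho(\arc{\gamma}(t))\,dt$ would be absolutely continuous on $[0,l(\gamma)]$; but the subcurves $\tilde\gamma_k\in\dfn^{-1}(\bigcurves{\epsilon}(Z))$ of $\gamma$ with parameter intervals shrinking to $t_0$ satisfy $\int_{\tilde\gamma_k}\rho\,ds\geq1$ by admissibility, while — using continuity of $v_\gamma$, which forces $l(\tilde\gamma_k)\to0$ — their arc-length intervals shrink to a point, so $\int_{\tilde\gamma_k}\rho\,ds\to0$, a contradiction. Since a family of non-rectifiable curves has modulus zero, it follows from $\Delta_\epsilon\cap\rectcurves(X)\subseteq\Gamma_0$ that $\mod(\Delta_\epsilon)=0$, proving the first assertion.

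For the second assertion, $\dfn$ fails to be continuous along $\gamma$ exactly when it fails to be $(1/m)$-continuous along $\gamma$ for some $m\in\BBN$, so the bad family is $\bigcup_m\Delta_{1/m}$. Given $m$, choose $n$ with $\epsilon_n<1/m$; then every curve along which $\dfn$ fails to be $(1/m)$-continuous has arbitrarily short subcurves in $\dfn^{-1}(\bigcurves{\epsilon_n}(Z))$ — here the needed inequality is automatic, the oscillation at the bad point exceeding $1/m>\epsilon_n$ — and $\mod(\dfn^{-1}(\bigcurves{\epsilon_n}(Z)))<\infty$ by hypothesis, so the argument of the previous paragraph yields $\mod(\Delta_{1/m})=0$. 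Countable subadditivity of $\mod$ then gives $\mod(\bigcup_m\Delta_{1/m})=0$. The main obstacle, and the only step that is not routine bookkeeping, is the conversion in the third paragraph of a finiteness bound on $\mod(\dfn^{-1}(\bigcurves{\epsilon}(Z)))$ into the vanishing of $\mod(\Delta_\epsilon)$, which rests on combining the arbitrarily-short-subcurve structure with the absolute continuity of $\rho$-integrals along almost every rectifiable curve.
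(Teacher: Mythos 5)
Your proof is correct and follows essentially the same route as the paper's: fix an admissible $\rho\in L^p(X)$ for $\dfn^{-1}(\bigcurves{\epsilon}(Z))$, observe that failure of $\epsilon$-continuity along a rectifiable $\gamma$ produces arbitrarily short subcurves on which $\int\rho\,ds\geq 1$, forcing $\rho$ to be non-integrable along $\gamma$, which by part \eqref{intaecurve} of \cref{properties} happens only on a modulus-zero family; the final clause is countable subadditivity. The paper phrases the contradiction as $\int_\gamma\rho\,ds=\infty$ after arc-length reparametrization rather than via absolute continuity of the indefinite integral, but this is the same argument, and your remark about the strict-versus-nonstrict $\epsilon$ bookkeeping applies equally to the paper's own proof and is indeed harmless for the ``in particular'' clause, which is all that is used later.
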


\begin{proof}
Let $\rho\in L^p(X)$ be admissible for $\dfn^{-1}(\bigcurves{\epsilon}(Z))$.  Suppose $\dfn$ is not $\epsilon$-con\-tin\-u\-ous on a rectifiable curve $\gamma\colon [a,b]\rightarrow X$.  Note that $\epsilon$-continuity is preserved under a change of parameter, so we may assume that $\gamma=\arc{\gamma}$. 
Thus there is a point $t\in [a,b]$ and a sequence of points $t_i\in[a,b]$ (without loss of generality, with $t_i<t$) converging to $t$ such that $\dfn(\gamma(t_i))-\dfn(\gamma(t))\geq\epsilon$.  Thus for each $i$, $\restrict[\gamma]{[t_i,t]}\in \dfn^{-1}(\bigcurves{\epsilon}(Z))$, and so $\int_{t_i}^t \rho(\gamma(t))\, dt \geq 1$ for every $i$.  This implies $\int_\gamma \rho\,ds=\infty$, which, by \cref{properties} (part \eqref{intaecurve}), occurs only on an exceptional family of curves.

The final statement of the lemma follows from the countable subadditivity of modulus.
\end{proof}

\begin{proof}[Proof of \cref{modgradthm}]
Throughout this proof, we say that a function $\rho$ is \textit{almost admissible} for a curve family $\Gamma$ if $\rho$ is admissible for some subfamily $\tilde{\Gamma}\subset\Gamma$, with $\mod(\Gamma\backslash \tilde{\Gamma})=0$.  In this situation we have
\begin{equation*}
\mod(\Gamma) =  \mod(\tilde{\Gamma})\leq \int_X \rho^p\,d\meas \text{,}
\end{equation*}
so that from the point of view of estimating modulus, almost-admissible functions work as well as admissible ones.  Note also that by \cite[p. 182]{Fuglede}, there is a ``minimal'' almost-admissible function $\rho$, i.e., one such that $\mod(\Gamma)=\int_X \rho^p\,d\meas$.  Though the concept of almost-admissibility is not strictly necessary for our proof, it will simplify the exposition.

If $\dfn$ has a weak upper gradient in $L^p(X)$, and $\epsilon>0$, then $\epsilon^{-1}\grad{\dfn}$ is almost admissible for $\dfn^{-1}(\bigcurves{\epsilon}(Z))$, and so
\begin{equation*}
\limsup_{\epsilon\rightarrow 0}\epsilon^p \mod(\dfn^{-1}(\bigcurves{\epsilon}(Z))) \leq \norm{\grad{\dfn}}^p<\infty\text{.}
\end{equation*}
To complete the proof, we must show that if inequality \eqref{finiteness} is satisfied, then there is a weak upper gradient $g$ for $\dfn$ such that 
\begin{equation}
\label{modgradproof2}
\norm{g}^p \leq \liminf_{\epsilon\rightarrow 0}\epsilon^p \mod(\dfn^{-1}(\bigcurves{\epsilon}(Z))) \text{.}
\end{equation}

Let $\{\epsilon_n\}$ be a sequence, converging to $0$, such that 
\begin{equation*}
\label{finitenes1}
\lim_{n\rightarrow 0}\epsilon_n^p\mod(\dfn^{-1}(\bigcurves{\epsilon_n}(Z))) = \liminf_{\epsilon\rightarrow 0}\epsilon^p\mod(\dfn^{-1}(\bigcurves{\epsilon}(Z))) <\infty \text{.}
\end{equation*}
For each $n$, let $g_n$ be a Borel function such that $\epsilon_n^{-1}g_n$ is a minimal almost admissible function for $\dfn^{-1}(\bigcurves{\epsilon_n}(Z))$.  Note that this implies that for all $m\in \BBN$, $(m\epsilon_n)^{-1}g_n$ is almost admissible for $\dfn^{-1}(\bigcurves{m\epsilon_n}(Z))$.

We now construct $g$.  By inequality \eqref{finiteness}, the $L^p(X)$ norms $\norm{g_n}$ are bounded.  Thus, by the reflexivity of $L^p(X)$, Mazur's Lemma, and Fuglede's Theorem, there is a sequence of convex combinations $\omega_n= \sum_{i=1}^{l_n} \lambda^{i,n}g_{k_{i,n}}$ that converges (strongly) in $L^p(X)$ to $g$, such that for all $n$ and all $i\leq l_n$, we have $k_{i,n}\geq n$, and such that \begin{equation}
\label{aecurveconv}
\lim_{n\rightarrow\infty}\int_\gamma \omega_n\,ds= \int_\gamma g\,ds
\end{equation}
on almost every curve $\gamma$.  

We wish to show that $g$ is a weak upper gradient.  Let $\Gamma$ denote the family of rectifiable curves $\gamma$ with the following properties:  
\begin{itemize}
\item $\dfn$ is continuous on $\gamma$.
\item If $n,m\in \BBN$, and $\gamma_0$ is a subcurve of $\gamma$ such that $\gamma_0\in \dfn^{-1}(\bigcurves{m\epsilon_n}(Z))$, then
\[ \int_{\gamma_0} g_n \,ds \geq m\epsilon_n \text{.}\]
\item Equation \eqref{aecurveconv} is satisfied.
\end{itemize}

By part \eqref{minorize} of \cref{properties}, \cref{curvecontinuity}, and countable subadditivity, almost every curve is in $\Gamma$, and so it suffices to verify inequality \eqref{ugdefeq} for every $\gamma \in \Gamma$.

Fix a curve $\gamma\in \Gamma$.  For every $\epsilon>0$, let $t_\epsilon = \sup\{t\in[a,b]: |\dfn(\gamma(t))-\dfn(\gamma(a))|\in \BBN\epsilon\}$. By the continuity of $\dfn$ along $\gamma$, $|\dfn(\gamma(b))-\dfn(\gamma(t_\epsilon))|<\epsilon$, and also $|\dfn(\gamma(t_\epsilon))-\dfn(\gamma(a))|= m_\epsilon \epsilon$ for some $m_\epsilon\in \BBN$.  Thus by the second property, for every $n$ we have
\begin{align*}
&\int_\gamma g_n \,ds \geq \int_{\restrict[\gamma]{[a,t_{\epsilon_n}]}} g_n\,ds \geq m_{\epsilon_n} \epsilon_n \\
&=  |\dfn(\gamma(t_{\epsilon_n}))-\dfn(\gamma(a))| \geq |\dfn(\gamma(b))-\dfn(\gamma(a))|-\epsilon_n\text{.}
\end{align*}
In particular, we have the inequality
\[\liminf_{n\rightarrow\infty} \int_{\gamma} g_n\,ds \geq |\dfn(\gamma(b))-\dfn(\gamma(a))|\text{.}\]
It follows immediately that
\[\liminf_{n\rightarrow\infty} \int_{\gamma} \omega_n\,ds \geq |\dfn(\gamma(b))-\dfn(\gamma(a))|\text{,}\]
and so by the third property, 
$g$ satisfies inequality \eqref{ugdefeq} on $\gamma$. Thus $g$ is a weak upper gradient for $\dfn$, which satisfies inequality \eqref{modgradproof2} by construction.
\end{proof}

\section{Geometric vs. analytic quasiconformality.}
\label{quasiconformality}
\renewcommand{\expon}{Q}
In this section we prove \cref{quasiconformal,annularquasiconformal}.
\begin{proof}[Proof of \cref{quasiconformal}]
We prove the theorem by demonstrating that statements \eqref{adef} and \eqref{gdef} are equivalent to another condition.
\begin{enumerate}[(I)]
\setcounter{enumi}{\value{tempenumi}}
\item \label{mdef}
The inequality
\begin{equation*}
\mod(\map^{-1}(\bigcurves{\epsilon}(V)))\leq \epsilon^{-Q}K\mapmeas(V)
\end{equation*}
holds for every $\epsilon>0$, and every open subset $V\subset Y$. 
\setcounter{tempenumi}{\value{enumi}}
\end{enumerate}

\eqref{adef}$\Rightarrow$\eqref{gdef}.
Let $\Gamma\subset \allcurves(X)$, and let $\rho\colon Y\rightarrow [0,\infty]$ be admissible for $\map(\Gamma)$.  We may assume without loss of generality that $\rho\in L^Q(X)$, for otherwise, $\mod(\map(\Gamma))=\infty$ and there is nothing to prove.

We claim that $(\rho\circ \map)\grad{\map} $ is almost admissible for $\Gamma$.  Let $\gamma\in \Gamma$.  Invoking \cref{gradsandvariation}, and noting that the admissibility condition \eqref{admissibility} is independent of parametrization, we may assume that $\gamma$ is absolutely continuous, that $\map$ is absolutely continuous along $\gamma$, and that inequality \eqref{r1ugdefeq} holds almost everywhere on $[a,b]$.  We then have
\begin{align*}
&\int_{\gamma} (\rho\circ \map)\grad{\map} \,ds
= \int_a^b \rho(\map(\gamma(t)))\grad{\map}(\gamma(t)) v_\gamma'(t)\,dt\\
&\geq \int_a^b \rho(\map(\gamma(t)))v_{\map(\gamma)}'(t)\,dt
=\int_{\map(\gamma)}\rho\,ds\geq 1\text{.}
\end{align*}
Thus, as claimed, $(\rho\circ \map)\grad{\map}$ is almost admissible for $\Gamma$, and so
\begin{align*}
&\mod(\Gamma) 
\leq \int_X \rho(\map(x))^Q\grad{\map}(x)^Q\,d\meas(x)
\leq K \int_X \rho(\map(x))^Q \jac(x)\,d\meas(x)\\
&\leq K \int_X \rho(\map(x))^Q\,d\pullmeas(x) 
= \int_Y \rho^Q \,d\mapmeas\text{.}
\end{align*}
Since this holds for all admissible functions $\rho$ for $\map(\Gamma)$, we obtain
\begin{equation*}
\mod(\Gamma) \leq K\mod(\map(\Gamma))\text{.}
\end{equation*}

\eqref{gdef}$\Rightarrow$\eqref{mdef}.
For every $\epsilon>0$, $\epsilon^{-1}$ is an admissible function for $\bigcurves{\epsilon}(V)$, and therefore $\mod(\bigcurves{\epsilon}(V))\leq \epsilon^Q\mapmeas(V)$.  This, along with \eqref{gdef}, immediately yields \eqref{mdef}.

\eqref{mdef}$\Rightarrow$\eqref{adef}.
Whenever $U\subseteq X$ is an open set such that $\mapmeas(\map(U))$ is finite, we have
\begin{equation*}
\liminf_{\epsilon\rightarrow 0}\epsilon^Q\mod(\map^{-1}(\bigcurves{\epsilon}(\map(U))))\leq K \mapmeas(\map(U))<\infty\text{,}
\end{equation*}
so that by \cref{modgradthm}, $\restrict[\map]{U}\in \newtonsobloc{1}{Q}(U,Y)$, and
\begin{equation*}
\int_U \grad{\restrict[\map]{U}}^Q \,d\meas \leq K\mapmeas(\map(U))=K \pullmeas (U)\text{.}
\end{equation*}
By \cref{mingradlocal}, the separability of $Y$ and local finiteness of $\mapmeas$ then imply that $\map\in\newtonsobloc{1}{Q}(X,Y)$, and that
\begin{equation}
\label{adefeq1}
\int_U \grad{\map}^Q \,d\meas \leq K \pullmeas (U)\text{,}
\end{equation}
for every open subset $U\subseteq X$.  By the Borel regularity of $\pullmeas$, inequality \eqref{adefeq1} holds whenever $U\subset X$ is Borel, and so \eqref{adef} follows from the definitions of $\pullmeas$ and $\jac$.  
\end{proof}

\begin{remark}
\label{methodsremark}
In the proof of \eqref{mdef}$\Rightarrow$\eqref{adef}, our construction of an upper gradient, via \cref{modgradthm}, is similar to that of \cite[Section 4]{BaloghKoskelaRogovin}.  
In each case, a sequence of curve families allows approximation of the gradient, and reflexivity, Mazur's Lemma, and Fuglede's Theorem allow passage to a limit.
The key difference is in the choice of curve families.  The situation in \cite{BaloghKoskelaRogovin} requires the intersection of curves in $X$ with certain annuli, which necessitates the use of the families $\bigcurves{\epsilon}(X)$ to control of their diameters.  To construct an upper gradient via condition \eqref{gdef}, on the other hand, we must relate the admissibility condition \eqref{admissibility} with the upper gradient inequality \eqref{ugdefeq}, which in turn dictates the use of the families $\map^{-1}(\bigcurves{\epsilon}(Y))$ to control the diameter of curves in $Y$.  This need to control curve length in the target is a general consideration when working from geometric rather than metric assumptions, and is the reason we used annular condensers in $Y$ rather than $X$ in the statement of \cref{annularquasiconformal}. 
\end{remark}

\begin{remark}
\label{onesided}
Our results are fundamentally one-sided in the absence of a Poincar\'e inequality, even if the underlying spaces are $Q$-regular.  Indeed, if 
$\mod(\allcurves(X))=0$ (say, $Q=3$, $X=\BBR^2$ with the ``snowflaked'' metric $\dist(t_1,t_2)=|t_1-t_2|^{2/3}$), and $\mod(\allcurves(Y))>0$ (for example, ``Rickman's rug'' $Y=\BBR\times\BBR$ with the metric $\dist((s_1,t_1),(s_2,t_2))=|s_1-s_2|+|t_1-t_2|^{1/2}$), then for \textit{every} homeomorphism $\map\colon X\rightarrow Y$, $\map$ trivially satisfies the conditions of \cref{quasiconformal}, yet $\map^{-1}$ \textit{never} satisfies them.

For a less extreme example, let $X$ and $Y$ be the 
spaces from the previous paragraph.  Consider the $4$-regular spaces $X'=X\times \BBR$ and $Y'=Y\times \BBR$, where $\dist_{X'}((x_1,t_1),(x_2,t_2))= \dist_X(x_1,x_2)+|t_1-t_2|$, and similarly for $\dist_{Y'}$.  Equip each space with its Hausdorff $4$-measure. Let $\map\colon X'\rightarrow Y'$ be the identity (identifying each space as a set with $\BBR^3$).  The Hausdorff $4$-measure is invariant under isometries, and is thus a multiple of Lebesgue $3$-measure for each space.  Thus the Jacobian of $\map$ is some nonzero constant, say $\jac=C$.  

Unlike $X$, the space $X'$ has a somewhat healthy family of rectifiable curves. In fact, by Fubini's theorem, every subset of positive Hausdorff $4$-measure meets a curve family of positive $4$-modulus.  

The  map $\map$ is absolutely continuous on every $\gamma\in\rectcurves(X')$, and satisfies
$\grad{\map}\equiv 1$ (since $\rectcurves(X')$ consists only of curves of the form $\gamma(t)= (x,t)$ for some $x\in X$).  Hence $\map$ again satisfies \eqref{adef} and \eqref{gdef}, with $K=\frac{1}{C}$.  

On the other hand, the curve family  $\Gamma=\{\gamma_{(r,s)}\}\subset \allcurves(Y')$, where $\gamma_{(r,s)}(t) = (r, t, s)$ for $0\leq t\leq 1$, satisfies $\mod[4](\Gamma) >0$, again by Fubini.  These curves, however, have  unrectifiable pre-images, and so $\map^{-1}$ fails to be absolutely continuous on almost every curve, and thus does not satisfy condition \eqref{adef}.
\end{remark}

\begin{proof}[Proof of \cref{annularquasiconformal}.]
\eqref{gdef}$\Rightarrow$\eqref{cdef}.  The functions $\charfcn{\lambda B_{r}(y)}/(r(\lambda-1))$ are admissible for $\acon{y}{r}{\lambda r}$, giving the estimate $\mod(\acon{y}{r}{\lambda r})\leq Cr^{-Q}\mapmeas(B_r(y))$, where $C$ depends only on the doubling constant.  This, combined with condition \eqref{gdef}, immediately yields condition \eqref{cdef}, with $K'=CK$.

\eqref{cdef}$\Rightarrow$\eqref{mdef}. Fix $\epsilon>0$ and let $V\subset Y$ be open.  Since $Y$ is doubling, by a well-known covering lemma (see, e.g., \cite[Theorem 1.2]{Heinonen}) there is a countable family of balls $B_n=B_{r_n}(y_n)\subset V$ covering $V$, with $4 \lambda r_n<\epsilon$, such that the balls $B_n/5$ are disjoint, and such that $\mod(\map^{-1}(\aacon_n))\leq 2K'r_n^{-Q}\mapmeas(B_n)$, 
where $\aacon_n=\acon{y_n}{r_n}{\lambda r_n}$. Let $\rho_n$ be an admissible function for $\map^{-1}(\aacon_n)$ such that $\int_X \rho_n^Q \,d\meas\leq 2\mod(\map^{-1}(\aacon_n))$, and let $\rho(x)=4\epsilon^{-1}\lambda \sup_{n\in\BBN} r_n\rho_n(x)$.

We claim $\rho$ is admissible for $\map^{-1}(\bigcurves{\epsilon}(V))$.  Indeed, let $\gamma\in \map^{-1}(\bigcurves{\epsilon}(V))$.  Since the balls $B_n$ cover $V$, $\map(\gamma(a))$ lies in some ball $B_{n_1}$.  Since $\diam(\map(\gamma))>\epsilon/2 \geq 2 \lambda r_n\geq \diam(\lambda B_n)$, $\map(\gamma)\in \aacon_{n_1}$.  Let $a=t_0$, and let $t_1\in [a,b]$ be the first point in the interval such that $\map(\gamma_1)\in \aacon_{n_1}$, where $\gamma_1=\restrict[\gamma]{[t_0,t_1]}$.  Note that since $\map(\gamma(a))\in B_{n_1}$, our selection of $t_1$ means that $\map(\gamma_1)\subset \lambda B_{n_1}$, so that $\diam(\map(\gamma_1))\leq 2\lambda r_{n_1}$. If $\diam(\map(\restrict[\gamma]{[t_1,b]}))> \epsilon/2$, then we choose $n_2$, $t_2$ and $\gamma_2$ in the same manner as before, so that  $\map(\gamma_2)\in \aacon_{n_2}$, with $\gamma_2=\restrict[\gamma]{[t_1,t_2]}$.  We proceed this way until $\diam(\map(\restrict[\gamma]{[t_m,b]}))\leq\epsilon/2$.  Since $\gamma_i\in \map^{-1}(\aacon_{n_i})$, we estimate
\begin{align*}
&\int_\gamma \rho \,ds
\geq \sum_{i=1}^{m} \int_{\gamma_i} \rho\,ds
\geq 4\epsilon^{-1}\lambda \sum_{i=1}^{m} r_{n_i} \int_{\gamma_i} \rho_{n_i}\,ds
\geq 4\epsilon^{-1}\lambda \sum_{i=1}^{m} r_{n_i}\\
&\geq 2\epsilon^{-1} \sum_{i=1}^{m} \diam(\map(\gamma_i))
\geq 2\epsilon^{-1}(\epsilon/2)=1\text{,}
\end{align*}
and so $\rho$ is admissible for $\map^{-1}(\bigcurves{\epsilon}(V))$.  Therefore,
\begin{align*}
&\mod(\map^{-1}(\bigcurves{\epsilon}(V))
\leq \int_X \rho^Q \,d\meas
= (4\epsilon^{-1}\lambda)^Q \int_X \sup_{n\in\BBN} (r_n\rho_n)^Q \,d\meas\\
&\leq (4\epsilon^{-1}\lambda)^Q \int_X \sum_{n\in\BBN} (r_n\rho_n)^Q \,d\meas
\leq 2(4\epsilon^{-1}\lambda)^Q \sum_{n\in\BBN} r_n^Q \mod(\map^{-1}(\aacon_n))\\
&\leq 4K'(4\epsilon^{-1}\lambda)^Q \sum_{n\in\BBN} \mapmeas(B_n)
\leq CK' \epsilon^{-Q} \sum_{n\in\BBN} \mapmeas(B_n/5)
\leq CK' \epsilon^{-Q}\mapmeas(V)\text{,}
\end{align*}
where $C$ depends only on $\lambda$ and the doubling constant of $\mapmeas$. Setting $K=CK'$  completes the proof.
\end{proof}

\begin{remark}
\label{applicationsremark}
Suppose that at every $y\in Y$, condition \eqref{gdef} holds for the family $\Gamma=\map^{-1}(\acon{y}{r_i}{\lambda r_i})$, for some sequence of radii $r_i$ approaching $0$.  This implies condition \eqref{cdef}, and so to prove lower quasiconformality, it suffices to prove it for a sequence of inverse images of annular condensers at each point.

If $Y$ is locally linearly locally connected, then for some $\lambda_3> \lambda_2> \lambda_1>1$, $B_{r}(y)$ and $Y\backslash B_{\lambda_3r}(y)$ are contained, respectively, in connected components of $B_{\lambda_1 r}(y)$  and $Y\backslash B_{\lambda_2r}(y)$, provided $r$ is sufficiently small.  There is thus a ring (that is, a family of all the curves connecting two disjoint continua) $\mathcal R$ such that $\aacon=\acon{y}{r}{\lambda_3 r}$ is minorized by $\mathcal R$, which in turn is minorized by $\tilde{\aacon}=\acon{y}{\lambda_1 r}{\lambda_2 r}$. 

If condition \eqref{gdef} is satisfied for rings, we then have
\begin{align} 
\label{modineqs}
&\mod(\map^{-1}(\aacon))
\leq \mod(\map^{-1}(\mathcal R)) 
\leq K \mod(\mathcal R)\\
&\leq K \mod (\tilde{\aacon})
\leq KCr^{-Q}\mapmeas(B_r(y))\text{,}\notag
\end{align}
so that by \cref{annularquasiconformal}, $\map$ satisfies conditions \eqref{adef} and  \eqref{gdef} quantitatively, generalizing the classical result (specifically the ``$K_O(f)$'' part of \cite[Theorem 36.1]{Vaisala}). It is unclear whether the implication is sharp as in the classical case.

Finally, condition \eqref{cdef} can be verified without difficulty when $X$ and $Y$ are Ahlfors $Q$-regular and $\map$ is quasisymmetric.  Indeed, $\map^{-1}(\acon{y}{r}{\lambda_1 r})$ is minorized by $\acon{\map^{-1}(y)}{r'}{\lambda_2 r'}$ for some radius $r'$, where $\lambda_1$ and $\lambda_2$ depend only on the function $\eta$ in the definition of quasisymmetry, recalled in \cref{lipminimal} below. 
Thus
\begin{align*} 
&\mod(\map^{-1}(\acon{y}{r}{\lambda_1 r}))
\leq \mod(\acon{\map^{-1}(y)}{r'}{\lambda_2 r'})\\
&\leq C_1(r')^{-Q}\meas(B_{r'}(\map^{-1}(y)))
\leq C_2\leq C_3r^{-Q}\mapmeas(B_r(y))\text{,}
\end{align*}
giving a short proof of Tyson's theorem \cite[Theorem 1.4]{Tyson} on the geometric quasiconformality of quasisymmetric maps.  This is not quite a new proof, however;  the methods in \cite{BaloghKoskelaRogovin} give a similar construction of the upper gradient, as discussed in \cref{methodsremark}.
\end{remark}

\section{Pointwise outer dilatation and P. I. spaces}
\label{lipminimal}
In this section we prove \cref{sharpening}.
For an arbitrary map $\dfn\colon X\rightarrow Z$, the \textit{approximate pointwise Lipschitz constant} of $\dfn$ is
\begin{equation*}
\aplip{\dfn}(x) = \inf_A \limsup_{x'\rightarrow x, x'\in A\backslash\{x\}} \frac{ |\dfn(x')- \dfn(x)|}{|x'-x|} \text{,}
\end{equation*}
with the infimum  taken over subsets $A\subseteq X$ having a Lebesgue point of density at $x$.  In general, if $x$ is isolated, we let $\lip{\dfn}=\aplip{\dfn}(x)=0$.
By \cite[Proposition 3.5]{KeithPI}, $\aplip{\dfn}=\lip{\dfn}$ for a locally Lipschitz function $\dfn$, provided $\meas$ is doubling.  One can check (see, e.g., \cite[Example 3.15]{HKST}) that $\lip{\dfn}$ is an upper gradient for a locally Lipschitz function $\dfn$.  

We say $X$ is a \textit{$p$-P.I. space} if it is complete, doubling, and admits a weak $(1,p)$-Poincar\'e inequality.  We say $Z$ has \textit{curvature bounded above} if it has that property (in the sense of Alexandrov), is locally compact, and is locally geodesically complete. Definitions and background on each type of space can be found, respectively, in \cite{HeinonenKoskela} and \cite{BBI}.

When $X$ is a P.I. space, and $\dfn$ is locally Lipschitz, $\lip{\dfn}$ and $\grad{\dfn}$ are at least comparable, regardless of $Z$; by \cite[Proposition 4.26]{Cheeger}, at almost every $x\in X$, 
\begin{equation}
\label{lipcomparable}
\frac{1}{C}\lip{\dfn}(x)\leq\grad{\dfn}(x)\leq\lip{\dfn}(x)\text{.}
\end{equation}
Here $C$ depends only on the constants associated with the doubling condition and Poincar\'e inequality for $X$.  \footnote{In \cite{Cheeger}, $Z=\BBR$ is assumed throughout, but the assumption is not  used for this result.  Occurrences of \cite[(4.3)]{Cheeger} need only be replaced with the Poincar\'e inequality for Banach space valued maps, via \cite[Theorem 4.3]{HKST}.}  

Without the assumption that $\dfn$ is locally Lipschitz, inequality \eqref{lipcomparable} need not hold \cite[Remark 2.16]{KeithPI}.  However, if $\dfn\in \newtonsobloc{1}{p}(X,Z)$, then $\dfn$ is locally Lipschitz off of sets of arbitrary small measure (see, e.g., \cite[Lemma 10.7]{HKST} and the preceding remarks).  Via the Kuratowski embedding \cite[Exercise 12.5]{Heinonen}, we may assume that $Z=l^{\infty}$, and so we may use the McShane extension \cite[Theorem 6.2]{Heinonen} and \cref{mingradrestrict} to conclude the following 
generalization of \cite[Proposition 4.26]{Cheeger}.
\begin{lemma}
\label{aplipcomparable}
Let $X$ be a $p$-P.I. space, $p>1$, and let $\dfn\in\newtonsobloc{1}{p}(X,Z)$, for any metric space $Z$.  Then for $\meas$-almost every $x\in X$,
\begin{equation*}
\frac{1}{C}\aplip{\dfn}(x)\leq\grad{\dfn}(x)\leq\aplip{\dfn}(x)\text{,}
\end{equation*} 
where $C$ is a constant depending only on $p$ and the data of $X$.
\end{lemma}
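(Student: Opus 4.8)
The plan is to reduce the assertion to the case of locally Lipschitz maps, where it is precisely the conjunction of \eqref{lipcomparable} with Keith's identity $\aplip{\dfn}=\lip{\dfn}$ for such maps, and to effect the reduction by combining a Lusin-type approximation of Newton--Sobolev maps by Lipschitz maps with the locality of $\grad{\dfn}$ furnished by \cref{mingradrestrict} and an analogous locality property of the approximate pointwise Lipschitz constant.

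First I would apply the Kuratowski embedding to assume $Z=l^{\infty}$; this changes neither $\grad{\dfn}$ (the inequality \eqref{ugdefeq} involves only distances between values of $\dfn$), nor $\aplip{\dfn}$, nor membership in $\newtonsobloc{1}{p}$. Then, for each $n\in\BBN$, I would use \cite[Lemma 10.7]{HKST} and the remarks preceding it to produce a closed set $F_n\subseteq X$ with $\meas(X\backslash F_n)<1/n$ on which $\dfn$ is locally Lipschitz, and apply the McShane extension \cite[Theorem 6.2]{Heinonen} coordinate by coordinate (which keeps the map $l^{\infty}$-valued and locally Lipschitz with the same local constants) to obtain $\dfn_n\colon X\rightarrow l^{\infty}$ with $\restrict[\dfn_n]{F_n}=\restrict[\dfn]{F_n}$. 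Since $\dfn_n$ is locally Lipschitz and $\meas$ is locally finite, $\dfn_n\in\lloc{p}(X,l^{\infty})$ and has the locally bounded upper gradient $\lip{\dfn_n}\in\lloc{p}(X)$, so $\dfn_n\in\newtonsobloc{1}{p}(X,l^{\infty})$; thus \cref{mingradrestrict}, applied on the Borel set $F_n$, yields $\grad{\dfn_n}=\grad{\dfn}$ $\meas$-almost everywhere on $F_n$.

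Next I would set up the pointwise comparison on $F_n$. Since $X$ is doubling, the Lebesgue density theorem gives that $\meas$-almost every $x\in F_n$ is a density point of $F_n$ (and, $F_n$ being closed, such an $x$ lies in $F_n$, so $\dfn(x)=\dfn_n(x)$). At such an $x$ I would check that $\aplip{\dfn}(x)=\aplip{\dfn_n}(x)$: for any set $A$ having a density point at $x$, the set $A\cap F_n$ again has a density point at $x$, and $\dfn$ and $\dfn_n$ coincide there, so the two defining infima may be taken over the common family of sets $A\cap F_n$ with the same integrand. Since $\dfn_n$ is locally Lipschitz and $\meas$ is doubling, \cite[Proposition 3.5]{KeithPI} gives $\aplip{\dfn_n}(x)=\lip{\dfn_n}(x)$, while \eqref{lipcomparable}, applied to the locally Lipschitz map $\dfn_n$ on the $p$-P.I. space $X$, gives $\frac{1}{C}\lip{\dfn_n}(x)\le\grad{\dfn_n}(x)\le\lip{\dfn_n}(x)$, with $C$ depending only on $p$ and the data of $X$. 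Feeding in $\grad{\dfn_n}(x)=\grad{\dfn}(x)$ yields the claimed inequalities for $\meas$-almost every $x\in F_n$; since $\meas\bigl(X\backslash\bigcup_n F_n\bigr)\le\inf_n\meas(X\backslash F_n)=0$, they hold for $\meas$-almost every $x\in X$.

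The main obstacle is less any single deep step than the careful packaging of this reduction: confirming that the Lipschitz (Lusin-type) approximation of \cite[Lemma 10.7]{HKST} is available in the localized, metric-space-valued $\newtonsobloc{1}{p}$ setting; checking that the coordinatewise McShane extension into $l^{\infty}$ preserves local Lipschitzness and does not leave $l^{\infty}$; and, most substantively, verifying the locality property of the approximate Lipschitz constant used to transport Keith's identity from $\dfn_n$ to $\dfn$. This last point is the real \emph{crux} --- it is what makes the upper bound $\grad{\dfn}\le\aplip{\dfn}$ go through, and not merely the lower one --- and requires some care, since the approximate pointwise Lipschitz constant is defined through an infimum over density sets rather than through the ambient metric alone.
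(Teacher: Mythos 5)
Your proposal is correct and follows essentially the same route as the paper, whose proof of \cref{aplipcomparable} is precisely the sketch in the preceding paragraph: Kuratowski embedding into $l^{\infty}$, Lusin-type Lipschitz approximation via \cite[Lemma 10.7]{HKST}, coordinatewise McShane extension, locality of $\grad{\dfn}$ via \cref{mingradrestrict}, and the locally Lipschitz case \eqref{lipcomparable} together with Keith's identity $\aplip{}=\lip{}$. Your explicit verification of the density-point locality of $\aplip{}$ is exactly the (unstated) reason the lemma is phrased in terms of $\aplip{}$ rather than $\lip{}$, so you have correctly identified and filled in the one step the paper leaves implicit.
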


If $Z=\BBR$, or more generally, has curvature bounded above, a theorem of Cheeger, along with generalizations due to Keith and Ohta, equates $\grad{\dfn}$ to $\lip{\dfn}$, or to $\aplip{\dfn}$, whenever $\dfn$ is a locally Lipschitz or Sobolev map, respectively.   
\begin{theorem}[{\cite[Theorem 6.1]{Cheeger}, \cite[Remark 2.16]{KeithPI}, \cite[Theorem 5.9]{Ohta}}]
\label{cheegerlip}
Let $p>1$, and let $\dfn\in \newtonsobloc{1}{p}(X,Z)$, where $X$ is a $p$-P.I. space and $Z$ has curvature bounded above.  Then $\grad{\dfn}=\aplip{\dfn}$.  In particular, if $\dfn$ is locally Lipschitz, then $\grad{\dfn}=\lip{\dfn}$.
\end{theorem}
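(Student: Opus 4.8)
Since \cref{aplipcomparable} already gives $\grad{\dfn}\le\aplip{\dfn}$ $\meas$-a.e., the content of the statement is the reverse inequality $\aplip{\dfn}(x)\le\grad{\dfn}(x)$ for $\meas$-a.e.\ $x$; granting it, the case of locally Lipschitz $\dfn$ follows at once, since then $\aplip{\dfn}=\lip{\dfn}$ by the result of Keith recalled above. My plan is a blow-up argument. By \cref{aplipcomparable}, $\dfn$ is locally Lipschitz off sets of arbitrarily small measure and $\aplip{\dfn}<\infty$ $\meas$-a.e., so, localizing via \cref{mingradlocal}, \cref{mingradrestrict}, and the Kuratowski/McShane reductions used above, one reduces to a Lipschitz $\dfn$ and to working near a point $x$ which is simultaneously: a Lebesgue point of $\grad{\dfn}^p$; a density point of the good set where $\dfn$ is Lipschitz; a point at which $(X,\meas)$ has a (nice, proper) pointed measured Gromov--Hausdorff tangent; and a point at which $Z$ has a tangent cone at $\dfn(x)$ --- the existence of this cone, and the fact that it is a proper $\mathrm{CAT}(0)$ metric cone, being exactly what the hypotheses ``locally compact, locally geodesically complete, curvature bounded above'' on $Z$ are for.

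Choose $x_n\to x$ realizing $\aplip{\dfn}(x)=\lim_n|\dfn(x_n)-\dfn(x)|/|x_n-x|$, set $\lambda_n=|x_n-x|$, and rescale the metric of $X$ about $x$ and of $Z$ about $\dfn(x)$ by $\lambda_n^{-1}$ (rescaling $\meas$ by a comparable factor). Along a subsequence, $(X,\meas)$ converges in the pointed measured Gromov--Hausdorff sense to a tangent $(X_x,\meas_x,o_X)$, $Z$ converges to its $\mathrm{CAT}(0)$ tangent cone $(Z_{\dfn(x)},o_Z)$, and the uniformly Lipschitz maps $\dfn$ converge to a map $D\colon X_x\to Z_{\dfn(x)}$ with $D(o_X)=o_Z$ and, since $x_n\to\xi$ for some $\xi$ with $|\xi-o_X|=1$, with $|D(\xi)-D(o_X)|=\aplip{\dfn}(x)$. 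The two structural inputs are: (a) the tangent $X_x$ is again a $p$-P.I.\ space --- hence doubling and geodesic --- because doubling and the weak Poincar\'e inequality are stable under measured Gromov--Hausdorff limits; and (b) the constant $\grad{\dfn}(x)$ is a $p$-weak upper gradient for $D$, because $x$ is a Lebesgue point of $\grad{\dfn}^p$ so the rescaled minimal upper gradients converge in $\lloc{p}$ to the constant $\grad{\dfn}(x)$, while the Sobolev class and its minimal upper gradients behave lower-semicontinuously under such convergence (a Fuglede-type stability, cf.\ \cref{properties}).

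With (a) and (b) in hand the inequality is immediate: joining $o_X$ to $\xi$ by a unit-speed geodesic $\sigma$ of length $1$ in the geodesic space $X_x$ and applying the upper gradient inequality for $D$ along $\sigma$,
\[
\aplip{\dfn}(x)=|D(\xi)-D(o_X)|\le\int_\sigma\grad{\dfn}(x)\,ds=\grad{\dfn}(x)\text{.}
\]
Combined with \cref{aplipcomparable} this yields $\grad{\dfn}(x)=\aplip{\dfn}(x)$ for $\meas$-a.e.\ $x$, and hence $\grad{\dfn}=\lip{\dfn}$ when $\dfn$ is locally Lipschitz.

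The main obstacle is everything packed into the middle paragraph: the measured Gromov--Hausdorff convergence of the rescalings of $(X,\meas)$ and of $Z$, the stability of the doubling and Poincar\'e conditions (so that $X_x$ is geodesic), and the stability of $p$-weak upper gradients (so that $\grad{\dfn}(x)$ survives as an upper gradient for $D$). This is the substance of Cheeger's differentiation theorem together with its metric-target refinements due to Keith and Ohta, not something one proves in a few lines; and it is precisely here that the hypotheses on $Z$ cannot be dropped, since for a general metric target the rescaled maps need not converge, the tangent cone of $Z$ need not exist or be $\mathrm{CAT}(0)$, and then only the non-sharp inequality $\aplip{\dfn}\le C\grad{\dfn}$ of \cref{aplipcomparable} remains true. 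The role of ``curvature bounded above'' is to push that constant $C$ down to $1$.
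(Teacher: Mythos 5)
Your blow-up strategy is genuinely different from the proof in the paper, and as sketched it has two gaps that are not merely matters of length. The decisive one is the last step: $\grad{\dfn}(x)$ survives in the tangent only as a $p$-\emph{weak} upper gradient of $D$, so the upper gradient inequality is available only along $p$-almost every curve in $X_x$, and the single geodesic $\sigma$ you select from $o_X$ to $\xi$ may well lie in the exceptional family. To extract $|D(\xi)-D(o_X)|\le\grad{\dfn}(x)\cdot|\xi-o_X|$ from a constant weak upper gradient you would need a positive-modulus family of curves joining small neighborhoods of $o_X$ and $\xi$ with lengths at most $1+\epsilon$; the Poincar\'e inequality only guarantees such families with lengths at most $C$ (its quasiconvexity constant), and that is exactly where the non-sharp constant of \cref{aplipcomparable} comes from. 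As written, your final display therefore reproves $\aplip{\dfn}\le C\,\grad{\dfn}$ rather than the sharp inequality. A symptom of the same problem: the $\mathrm{CAT}(0)$ structure of the tangent cone of $Z$ is never actually used in your concluding display, so if the argument were complete it would apply to arbitrary targets (blow up inside $\ell^\infty$ after a Kuratowski embedding), contradicting the paper's remark that the general-target case is unknown. The second gap is the middle paragraph itself: stability of Newton--Sobolev classes and of minimal weak upper gradients under pointed measured Gromov--Hausdorff convergence of \emph{varying} spaces is a substantial theorem, not a ``Fuglede-type'' statement (\cref{properties} concerns a fixed space), and it is not what Cheeger, Keith, or Ohta prove, so citing them does not discharge it.

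The paper's route avoids blow-ups entirely. It takes the real-valued case $Z=\BBR$ as known from Cheeger and Keith and reduces to it by composing with distance functions: for a countable dense $Z_0\subset Z$, a Borel $g$ is a weak upper gradient for $\dfn$ if and only if it is one for every $d_z\circ\dfn$ with $z\in Z_0$, so everything comes down to the pointwise identity $\aplip{\dfn}(x)=\sup_{z\in Z_0}\aplip{(d_z\circ\dfn)}(x)$. This is where the curvature bound genuinely enters: in a normal neighborhood of $\dfn(x)$ one chooses a finite $S_\epsilon\subset Z_0$ so that every direction at $\dfn(x)$ makes small angle with some $s\in S_\epsilon$, and Ohta's angle-comparison lemma yields $\aplip{(d_s\circ\dfn)}(x)\ge(1-\epsilon)\aplip{\dfn}(x)-\epsilon$ for some $s$. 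If you want to salvage a tangent-map proof, you must import the convergence and stability theory from outside this paper and replace your single geodesic by a quantitative statement about positive-modulus families of near-geodesics in the tangent; the composition-with-distance-functions reduction is considerably lighter.
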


\begin{proof}
For the case $Z=\BBR$, the theorem was proved in \cite{Cheeger} for locally Lipschitz functions, and later in \cite{KeithPI} for Newton-Sobolev functions.  The extension to targets with curvature bounded above was proved, in the Lipschitz case, in \cite{Ohta}.  The Newton-Sobolev case for these targets requires only a small modification of the argument there, which we now give. 

Let $d_z(z')=|z-z'|$, for $z,z'\in Z$.  Let $Z_0\subset Z$ be countable and dense, and note that $g$ is a weak upper gradient for $\dfn$ if and only if it is a weak upper gradient for $d_z\circ\dfn$ for each $z\in Z_0$. Since the theorem holds for real valued functions, and since, by \cref{aplipcomparable}, $\aplip{\dfn}(x)<\infty$ almost everywhere, it suffices to show that
$\aplip{\dfn}(x)=\sup_{z\in Z_0} \aplip{(d_z\circ \dfn)}(x)$ for every $x\in X$ such that $\aplip{\dfn}(x)<\infty$.  

Fix such a point $x$, fix $\epsilon>0$, and let $U$ be a normal neighborhood of $\dfn(x)$.  
By the compactness of the set of directions from $x$, and the density of $Z_0$, there is a finite subset $S_\epsilon\subset Z_0\cap U$ such that for all $z\in U\backslash\{\dfn(x)\}$, there exists $s\in S_{\epsilon}$ such that $\cos(\angle z\dfn(x)s) \geq 1-\epsilon$. By the definition of $\aplip{\dfn}$ and finiteness of $S_\epsilon$, there is a subset $A\subset X$ such that $x$ is a Lebesgue point of $A$, and for each $s\in S_\epsilon$, 
\begin{equation*}
\aplip{(d_s\circ\dfn)}(x) \geq \limsup_{x'\rightarrow x, x'\in A\backslash\{x\}} \frac{ |d_s(\dfn(x'))- d_s(\dfn(x))|}{|x'-x|} -\epsilon \text{.}
\end{equation*}
The argument from \cite[Lemma 5.4]{Ohta} then shows that for some $s\in S_\epsilon$,
\[\aplip{(d_s\circ \dfn)}(x)\geq (1-\epsilon)\aplip{\dfn}(x)-\epsilon\text{.}\]
Letting $\epsilon\rightarrow 0$ completes the proof.  Note that where the argument in \cite{Ohta} invokes the local Lipschitz property, it suffices to use the fact that by our choice of $x$, we may assume that the ratio $|\dfn(x')-\dfn(x)|/|x'-x|$ is bounded on $A$.
\end{proof}


\begin{remark}
We do not know whether \cref{cheegerlip} holds for arbitrary metric space targets $Z$.  The argument in \cite{Cheeger} relies on the reflexivity of the Cheeger-Sobolev spaces $\cheegersob{1}{p}(X,\BBR)$, and so does not seem to generalize to other targets.  Our discussion in this section shows that the final statement in \cref{sharpening} is valid whenever the conclusion of \cref{cheegerlip} holds for $Z=Y$.
\end{remark}

Next, suppose $\map\colon X\rightarrow Y$ is quasisymmetric.  Recall that this means there is a homeomorphism $\eta\colon [0,\infty)\rightarrow [0,\infty)$ such that 
\[
\frac{|\map(x_3)-\map(x_2)|}{|\map(x_3)-\map(x_1)|} \leq \eta \left(\frac{|x_3-x_2|}{|x_3-x_1|}\right)
\]
for every three distinct points $x_1,x_2,x_3\in X$.  
\begin{lemma}
\label{qslip}
Suppose $\meas$ is doubling, and let $\map\colon X\rightarrow Y$ be a quasisymmetric embedding.  Then for every $x\in X$, $\aplip{\map}(x)=\lip{\map}(x)$.
\end{lemma}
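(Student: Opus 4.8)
The plan is to establish the nontrivial inequality $\aplip{\map}(x)\ge\lip{\map}(x)$; the reverse holds because $X$ itself is a density-$1$ set at $x$. We may assume $x$ is not isolated (otherwise both sides are $0$ by convention) and that $\ell:=\lip{\map}(x)>0$, with $\ell=\infty$ allowed. Fix any measurable $A\subseteq X$ with $\meas(A\cap B_r(x))/\meas(B_r(x))\to 1$ as $r\to0$; it suffices to prove $\limsup_{x'\to x,\ x'\in A\setminus\{x\}}|\map(x')-\map(x)|/|x'-x|\ge\ell$, since the infimum over such $A$ is exactly $\aplip{\map}(x)$. Pick $x_i\to x$, $x_i\ne x$, with $|\map(x_i)-\map(x)|/|x_i-x|\to\ell$ and set $r_i:=|x_i-x|\to0$.

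The core of the argument is a comparison with nearby points of $A$ via the definition of quasisymmetry and the triangle inequality. Fix $\delta\in(0,1/2)$. As explained below, for all large $i$ there is $a_i\in A\cap B_{\delta r_i}(x_i)$, and we may assume $a_i\ne x_i$ (otherwise $x_i\in A$ for infinitely many $i$ and the claim is immediate). Then $a_i\to x$ and $(1-\delta)r_i\le|a_i-x|\le(1+\delta)r_i$. Applying quasisymmetry to the distinct triple $(x_1,x_2,x_3)=(x,a_i,x_i)$ and using $|a_i-x_i|/|x_i-x|\le\delta$ gives $|\map(a_i)-\map(x_i)|\le\eta(\delta)\,|\map(x_i)-\map(x)|$; hence, by the triangle inequality, $|\map(a_i)-\map(x)|\ge(1-\eta(\delta))|\map(x_i)-\map(x)|$, so that
\[
\frac{|\map(a_i)-\map(x)|}{|a_i-x|}\;\ge\;\frac{1-\eta(\delta)}{1+\delta}\cdot\frac{|\map(x_i)-\map(x)|}{r_i}\text{.}
\]
Letting $i\to\infty$ shows $\limsup_{x'\to x,\ x'\in A\setminus\{x\}}|\map(x')-\map(x)|/|x'-x|\ge\frac{1-\eta(\delta)}{1+\delta}\,\ell$ (the right side being $\infty$ when $\ell=\infty$ and $\eta(\delta)<1$). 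Since $\eta$ is a homeomorphism of $[0,\infty)$, $\eta(\delta)\to0$ as $\delta\to0^+$, and letting $\delta\to0^+$ yields the desired bound $\ge\ell$.

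It remains to produce the points $a_i$, which is the only place the doubling hypothesis on $\meas$ is used, and which I expect to be the main technical point, since it forces a comparison between balls with different centers. Let $C_\meas$ be the doubling constant and choose $N=N(\delta)\in\BBN$ with $2^N\ge(2+\delta)/\delta$. From $B_{(1+\delta)r_i}(x)\subseteq B_{(2+\delta)r_i}(x_i)$ and $N$ applications of the doubling inequality, $\meas(B_{\delta r_i}(x_i))\ge C_\meas^{-N}\meas\big(B_{(1+\delta)r_i}(x)\big)$; all these balls have finite, positive measure, since $\meas$ is locally finite and positive on open sets. The density hypothesis gives $\meas\big((X\setminus A)\cap B_{(1+\delta)r_i}(x)\big)<C_\meas^{-N}\meas\big(B_{(1+\delta)r_i}(x)\big)$ for all large $i$, whence
\[
\meas\big((X\setminus A)\cap B_{\delta r_i}(x_i)\big)\le\meas\big((X\setminus A)\cap B_{(1+\delta)r_i}(x)\big)<\meas\big(B_{\delta r_i}(x_i)\big)\text{,}
\]
so $\meas(A\cap B_{\delta r_i}(x_i))>0$ and in particular $A\cap B_{\delta r_i}(x_i)\ne\emptyset$. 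Choosing any $a_i$ in this set completes the construction, and hence the proof.
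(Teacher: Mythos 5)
Your proof is correct and follows essentially the same route as the paper: take a sequence realizing $\lip{\map}(x)$, replace each point by a nearby point of $A$ (which exists by the density of $A$ at $x$ together with the doubling of $\meas$), and control the resulting error multiplicatively via quasisymmetry and the triangle inequality, letting the perturbation parameter tend to $0$. The only difference is presentational: the paper simply asserts the existence of the approximating points $x_n'\in A$, whereas you spell out the doubling/density argument that justifies it.
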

\begin{proof}
The statement is trivial if $x$ is isolated.  Suppose, then, that $x$ is not isolated, let $\{x_n\}$ be a sequence of points in $X$ with $\lim x_n = x$, and let $x$ be a Lebesgue point of a Borel set $A\subset X$.  After passing to a subsequence, we may assume there are points $x_n'\in A$ such that $|x_n'-x_n|\leq \min(|x_n-x|/n,|x_n'-x|/n)$. We then have
\begin{align*}
&\frac{|\map(x_n)-\map(x)|}{|x_n-x|}
= \left(\frac{|\map(x_n)-\map(x)|}{|\map(x_n')-\map(x)|}\right)
\left(\frac{|\map(x_n')-\map(x)|}{|x_n'-x|}\right)
\left(\frac{|x_n'-x|}{|x_n-x|}\right)\\
&\leq \left(\eta\left(\frac{1}{n}\right)+1\right)
\left(\frac{|\map(x_n')-\map(x)|}{|x_n'-x|}\right)
\left(\frac{1}{n}+1\right)\text{,}
\end{align*}
and so the lemma follows upon passing to the limit supremum as $n\rightarrow \infty$.
\end{proof}
Combining \cref{cheegerlip,aplipcomparable,qslip}, we obtain the following result.
\begin{proposition}
\label{piqslip}
Let $X$ be a $p$-P.I. space, $p>1$. If $\map\in\newtonsobloc{1}{p}(X,Y)$ is a quasisymmetric homeomorphism, then for $\meas$-almost every $x\in X$,
\begin{equation*}
\frac{1}{C}\lip{\dfn}(x)\leq\grad{\dfn}(x)\leq\lip{\dfn}(x)\text{,}
\end{equation*} 
where the constant $C$ depends only on $p$ and the data of $X$, and not on the space $Y$.  If $Y$ has curvature bounded above, then $\grad{\dfn}(x)=\lip{\dfn}(x)$  $\meas$-almost everywhere.
\end{proposition}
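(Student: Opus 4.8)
The plan is to obtain the proposition by simply combining the three results just cited, \cref{cheegerlip,aplipcomparable,qslip}, after observing that a $p$-P.I.\ space is in particular doubling. First I would note that, $X$ being a $p$-P.I.\ space, the measure $\meas$ is doubling, so that \cref{qslip} applies to the quasisymmetric homeomorphism $\map$ and gives $\aplip{\map}(x)=\lip{\map}(x)$ for \emph{every} $x\in X$, with no exceptional set.

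Next, since $\map\in\newtonsobloc{1}{p}(X,Y)$ and $X$ is $p$-P.I., \cref{aplipcomparable} (applied with $Z=Y$) yields $\frac{1}{C}\aplip{\map}(x)\leq\grad{\map}(x)\leq\aplip{\map}(x)$ for $\meas$-almost every $x$, where $C$ depends only on $p$ and the doubling and Poincar\'e data of $X$; crucially, the target $Y$ enters neither the hypotheses nor the constant of \cref{aplipcomparable}. Substituting the identity $\aplip{\map}=\lip{\map}$ from the first step then gives the displayed two-sided estimate, with $C$ independent of $Y$ as claimed.

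For the final, sharp assertion, under the additional hypothesis that $Y$ has curvature bounded above I would invoke \cref{cheegerlip} with $Z=Y$, obtaining $\grad{\map}(x)=\aplip{\map}(x)$ for $\meas$-almost every $x$, and then apply \cref{qslip} once more to rewrite $\aplip{\map}$ as $\lip{\map}$, so that $\grad{\map}=\lip{\map}$ almost everywhere.

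There is no real obstacle here beyond bookkeeping: all three ingredients are already established, and the only points needing a word of care are the compatibility of hypotheses --- that ``$p$-P.I.'' supplies the doubling property required by \cref{qslip}, and that the constant furnished by \cref{aplipcomparable} is genuinely target-independent --- together with the harmless change of name from $\dfn$ in the displayed inequalities to the homeomorphism $\map$ of the hypothesis.
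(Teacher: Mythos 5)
Your proof is correct and is essentially the paper's own argument: the paper simply states that the proposition follows by combining \cref{cheegerlip,aplipcomparable,qslip}, and your write-up supplies exactly that chain (doubling from the P.I.\ hypothesis to invoke \cref{qslip}, the target-independent constant from \cref{aplipcomparable}, and \cref{cheegerlip} for the sharp case). Nothing is missing.
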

\begin{proof}[Proof of \cref{sharpening}.]
Let $\map$ be as in \cref{sharpening}. By \cite[Theorem 5.12]{HeinonenKoskela},  $X$ is a $Q$-P.I. space.  Since $\meas$ is doubling, $X$ is a Vitali space \cite[Remark 1.13]{Heinonen}, so
\begin{equation*}
\jac(x)=\lim_{r\rightarrow 0} \frac{\mapmeas(\map(B_r(x)))}{\meas(B_r(x))}\text{,}
\end{equation*}
almost everywhere \cite[2.9.2]{Federer}, and so $H_O(x,\map)=\lip{\map}(x)^Q/\jac(x)$ almost everywhere.  By condition \eqref{gdef} and \cite[Theorem 9.8]{HKST}, $\map$ is locally quasisymmetric.    
The theorem now follows from \cref{piqslip,quasiconformal}.

\end{proof}

\bibliographystyle{amsplain}
\bibliography{refrepository}

\end{document}